\documentclass[11pt]{article}

\usepackage{amssymb,amsmath,amsthm,rotating,setspace}

\topmargin -.5in
\textheight 9in
\textwidth 6.5in
\oddsidemargin 0.0in
\evensidemargin 0.0in

\numberwithin{equation}{section}

\newcommand{\n}{\noindent}

\theoremstyle{plain}
\newtheorem{thm}{Theorem}[section]
\newtheorem{lem}[thm]{Lemma}

\newtheorem{pro}[thm]{Proposition}
\newtheorem{cor}[thm]{Corollary}
\newtheorem{que}[thm]{Question}

\theoremstyle{definition}
\newtheorem{defn}[thm]{Definition}

\newtheorem{rem}[thm]{Remark}

\begin{document}

\title{Connections of the Corona Problem with Operator Theory and Complex Geometry}

\author{Ronald G.~Douglas}

\date{}
\maketitle

\abstract{
The corona problem was motivated by the question of the density of the open unit disk $\mathbb{D}$ in the maximal ideal space of the algebra, $H^\infty (\mathbb{D})$,  of bounded holomorphic 
functions on $\mathbb{D}$.  In this note we study relationships of the problem with questions in operator theory and complex geometry.  We use the framework of Hilbert modules focusing on 
reproducing kernel Hilbert spaces of holomorphic functions on a domain, $\Omega$, in $\mathbb{C}^m$.  We interpret several of the approaches to the corona problem from this point of view.  A 
few new observations are made along the way.}

\vfill

\n 2012 MSC: 46515, 32A36, 32A70, 30H80, 30H10, 32A65, 32A35, 32A38

\n Keywords: corona problem, Hilbert modules, reproducing kernel Hilbert space, commutant lifting theorem

\section{Introduction}

\indent

Frequently, questions in abstract functional analysis lead to very concrete problems in ``hard analysis'' with results on the latter having stronger implications in the larger abstract context.
That is the case with the corona problem and describing some of these larger relationships is the main goal of this note.  In this problem in analysis, however, the mix is quite broad and 
includes aspects of complex geometry and function theory as well as operator theory.  

Almost all the results mentioned are due to others, as we will try to make clear, and the novelty, if any,  is in the organization and point of view.  There are a few observations in Sections
6 and 7 which may be new.  In particular, we show if the multiplier algebra for a reproducing kernel Hilbert module is logmodular, then the Toeplitz corona theorem implies the corona theorem.  
Further, for all reproducing kernel Hilbert modules, knowing that the Toeplitz corona theorem holds for all cyclic submodules, with a \emph{uniform} lower bound implies the corona theorem.

The corona problem springs from the study of uniform algebras and, in particular, the description of their maximal ideal spaces.  Following Gelfand's fundamental results on commutative Banach
algebras, calculating the space of maximal ideals for particular commutative Banach algebras became a central issue in functional analysis.  In 1941 Kakutani raised the question for the
maximal ideal space for the algebra, $H^\infty (\mathbb{D})$, of bounded holomorphic functions on the open unit disk $\mathbb{D}$ and whether or not the open disk $\mathbb{D}$ is dense in it.  
One quickly reduces the latter problem to showing for $\{\varphi_i\}^{n}_{i=1}$ in $H^\infty (\mathbb{D})$, satisfying $$\sum^{n}_{i=1} |\varphi_i (z)|^2 \geq \epsilon^2 >0$$ for 
$z \in \mathbb{D}$, that there exists an $n$-tuple $\{\psi_i \}^{n}_{i=1} \subset H^\infty (\mathbb{D})$ such that $$\sum^{n}_{i=1} \varphi_i (z) \psi_i (z)=1$$ for $z \in \mathbb{D}$.  

If $\{\varphi_i\}^{n}_{i=1}$ consists of continuous functions on the closure $\bar{\mathbb{D}}$ of $\mathbb{D}$ or are in $C(\bar{\mathbb{D}})$, then it is not hard to show that an $n$-tuple 
$\{\psi_i\}^{n}_{i=1}$ exists in $C(\bar{\mathbb{D}})$ satisfying $$\sum^{n}_{i=1} \varphi_i \psi_i =1 \text{ in }C(\hat{\mathbb{D}}).$$  If one could control the supremum norms 
$\{||\psi_i||_\infty \}^{n}_{i=1}$ in terms of the norms $\{||\varphi_i||_\infty\}^{n}_{i=1}$ and $\epsilon$, then a standard normal families argument from function theory would establish the 
result for $H^\infty (\mathbb{D})$.  But the easy proof for $C(\bar{\mathbb{D}})$ doesn't yield bounds.  (A slightly more general result for holomorphic functions continous on the closure of a 
domain, with proof, appears in Section 7.)  However, even if one tries to extend this result for $C(\bar{\mathbb{D}})$ to $H^\infty (\mathbb{D})$ in some other way, one quickly sees that in 
most applications of the  solution to the corona problem, one needs to discuss such bounds anyway.  

In this note we will cite the relevant sources for the results discussed but refer the reader to [2] for more detailed references and historical remarks.  

\section{The Corona Problem}
Although the corona problem could be formulated more generally, here we restrict attention to algebras of bounded holomorphic functions.  Let $\Omega$ be a bounded connected domain in 
$\mathbb{C}^m$ for some positive integer $m$ and let $H^\infty (\Omega)$ denote the algebra of bounded holomorphic functions on $\Omega$.  Define the supremum norm by 
$$||\varphi||_\infty = \sup_{\omega \in \Omega} |\varphi(\omega)| \text{ for }\varphi \in H^\infty (\Omega).$$  Then $H^\infty (\Omega)$ is a commutative Banach algebra for the pointwise 
algebraic operations and hence there is a compact Hausdorff space, $M_{H^\infty (\Omega)}$, of maximal ideals.  Recall, following Gelfand, that a maximal ideal $I$ in the commutative Banach 
algebra $H^\infty (\Omega)$ can be identified with a multiplicative linear functional on $H^\infty (\Omega)$.  This follows since $H^\infty (\Omega)/I \cong \mathbb{C}$ as algebras by Gelfand's 
theorem.  The family of all such maps lies in the unit ball of the Banach space dual of $H^\infty (\Omega)$ and, in the weak*-topology, is a compact Hausdorff space, $M_{H^\infty (\Omega)}$, 
called the maximal ideal space.  

For $\omega \in \Omega$, the set $I_\omega =\{\varphi \in H^\infty (\Omega): \varphi(\omega) = 0 \}$ is a maximal ideal in $H^\infty (\Omega)$ with the corresponding multiplicative linear 
functional defined as evaluation at $\omega$.  Thus one has an imbedding, $\Omega \subseteq M_{H^\infty (\Omega)}$, of $\Omega$ in $M_{H^\infty (\Omega)}$.  The corona problem asks: Is $\Omega$
dense in $M_{H^\infty (\Omega)}$?  Let $\text{clos }\Omega$ denote the closure of $\Omega$ in $M_{H^\infty (\Omega)}$.  The complement, $M_{H^\infty (\Omega)} \setminus \text{clos }\Omega$, if 
non empty, is said to be the \emph{corona} for $H^\infty (\Omega)$.  However, there is a small problem.

There exist connected bounded domains $\Omega_1 \subseteq \Omega_2 \subseteq \mathbb{C}^m$ so that $H^\infty (\Omega_1)=H^\infty (\Omega_2)$ in the sense that every 
$\varphi \in H^\infty (\Omega_1)$ extends to a bounded holomorphic function on $\Omega_2$.  In such cases, not all points in $M_{H^\infty (\Omega_1)} \setminus \text{clos }\Omega_1$ should be 
considered to be in the corona; in particular, the points in $\Omega_2 \setminus \bar{\Omega}_1$ should not be in the corona.  Hence we should, and do, restrict attention to bounded domains 
$\Omega$ for which no such larger domain exists.  Hence, we make the following assumption from now on:\\
\emph{$\Omega$ is a bounded connected domain in $\mathbb{C}^m$ for which no super domain exists supporting the same algebra of bounded holomorphic functions.}

Domains such as the unit ball, $\mathbb{B}^m$, or the polydisk, $\mathbb{D}^m$, have this property but, for example, the domain between two spheres in $\mathbb{C}^m$ does not.  For an 
$\Omega \subseteq \mathbb{C}^m$, consider the map $\pi_\Omega :M_{H^\infty (\Omega)} \to \mathbb{C}^m$ such that $\pi_\Omega (x)=(\hat{z}_1 (x),...,\hat{z}_m (x))$, where 
$\{ \hat{z}_i \}^{m}_{i=1}$ are the Gelfand transforms of the functions $\{ z_i \}^{m}_{i=1} \subseteq H^\infty (\Omega)$.  Then the open subset of $M_{H^\infty (\Omega)}$ on which $\pi_\Omega$
is locally one-to-one defines the largest domain in $\mathbb{C}^m$ containing $\Omega$ supporting the same algebra of bounded holomorphic functions.  Since in this note this topic is only 
peripheral for us, we won't consider the problem further of characterizing the property.

We now return to the corona problem.

\begin{itemize}
 \item[ ] If $\{\varphi_i\}^{n}_{i=1} \subseteq H^\infty (\Omega)$ satisfies 
 \item[(0)] $$\sum^{n}_{i=1} |\varphi_i (\omega)|^2 \geq \epsilon^2 >0$$ for $\omega \in \Omega$ and $\Omega$ is dense in 
 $M_{H^\infty (\Omega)}$, then inequality (0) extends to all of $M_{H^\infty (\Omega)}$.  Therefore, the ideal 
 $$J = \left\{ \sum^{n}_{i=1} \varphi_i \psi_i : \{ \psi_i \}^{n}_{i=1} \subseteq H^\infty (\Omega) \right\},$$ generated by the set $\{\varphi_i \}^{n}_{i=1}$, is not contained in any proper 
 maximal ideal.  Hence the function $1\in J$, or
 \item[(1)] There exists $\{\psi_i\}^{n}_{i=1} \subseteq H^\infty (\Omega)$ such that $$\sum^{n}_{i=1} \varphi_i (\omega) \psi_i (\omega)=1$$ for $\omega \in \Omega$ or 
 $$\sum^{n}_{i=1} \varphi_i \psi_i=1\text{ in }H^\infty (\Omega).$$
\end{itemize}

\noindent Thus, we can restate the corona problem:\\
For $\{\varphi_i\}^{n}_{i=1} \subseteq H^\infty (\Omega)$ does (0) imply (1)?  

It is clear that (1) implies (0).  In fact, a simple application of the Cauchy-Schwarz inequality yields
\begin{equation*}
 1=|\sum^{n}_{i=1} \varphi (\omega) \psi_i (\omega)| \leq \left( \sum^{n}_{i=1} |\varphi_i (\omega)|^2 \right)^{\frac{1}{2}} \left( \sum^{n}_{i=1} |\psi_i (\omega)|^2 \right)^{\frac{1}{2}}
 \leq \left\{ \sum^{n}_{i=1} |\varphi_i (\omega)|^2 \right\}^{\frac{1}{2}} \left\{ \sum^{n}_{i=1} ||\psi_i ||^{2}_{\infty} \right\}^{\frac{1}{2}}
\end{equation*}

\noindent or 
\begin{equation*}
 \sum^{n}_{i=1} |\varphi_i (\omega)|^2 \geq 1/ \left\{ \sum^{n}_{i=1} ||\psi_i||^{2}_{\omega} \right\}.
\end{equation*}

In 1962 Carleson settled the corona problem for $H^\infty (\mathbb{D})$ in the affirmative using techniques from harmonic analysis and function theory [1].  Since then, many results have been 
obtained, in both the affirmative and negative, for the corona problem for a variety of domains in $\mathbb{C}^m$ (cf. [2]).  We will not pursue these results or their proofs in this note.  
Rather we want to explore various connections and relationships of the corona problem with operator theory and complex geometry.

\section{Hilbert Modules and the Corona Problem}
Although there is no Hilbert space mentioned in the statement of the corona problem, there is a natural way to relate the corona problem for function algebras to operator theory.  Recall that a
Hilbert space $\mathcal{R}$ of holomorphic functions on a bounded, connected domain $\Omega$ of $\mathbb{C}^m$ is said to be a \emph{reproducing kernel Hilbert space (RKHS)} if 
$\mathcal{R} \subseteq \emph{O}(\Omega)$ such that the evaluation map $ev_\omega (f) = f(\omega)$ for $f \in \mathcal{R}$ is bounded for $\omega \in \Omega$, where $\emph{O} (\Omega)$ denotes 
the space of holomorphic functions on $\Omega$.  

One obtains the usual ``two-variable'' kernel function, $K(z, \omega)$, on $\Omega \times \Omega$ by setting 
$K(z, \omega) = ev_z ev^{*}_{\omega} \in \mathcal{L}(\mathbb{C},\mathbb{C}) \cong \mathbb{C}$ for $z,\omega \in \Omega$.  The unique function $k_\omega$ satisfying 
$ev_\omega f=< f, k_\omega >_{\mathcal{R}}$ is obtained by setting $k_\omega (z) = K(z,\omega)$.

If $z_i \mathcal{R} \subseteq \mathcal{R}$ for $i=1, 2, ..., m$, then $\mathcal{R}$ is a Hilbert module over the algebra of polynomials in $m$ variables , $\mathbb{C}[z_1, ..., z_m]$.  In this 
case we say that $\mathcal{R}$ is a \emph{reproducing kernel Hilbert module (RKHM)}.

Although it is not necessary, to simplify matters we assume that $1 \in \mathcal{R}$ which implies $\mathbb{C}[z_1, ..., z_m ]\subseteq \mathcal{R}$.  In general, $\mathbb{C}[z_1, ..., z_m]$ is
not dense in $\mathcal{R}$ but it is in many natural examples, such as for $\mathcal{R}$ the Hardy space, $H^2 (\partial \mathbb{B}^m)$, on the unit ball, $\mathbb{B}^m$, in $\mathbb{C}^m$ or 
the Bergman space $L^{2}_{a} (\Omega)$, where the former space can be defined as the closure of $\mathbb{C}[z_1, ..., z_m]$ in $L^2 (\partial \mathbb{B}^m)$ for Lebesgue measure on the unit 
sphere, $\partial \mathbb{B}^m$, and the latter space is the closure of $\mathbb{C}[z_1,...,z_m]$ in $L^2 (\Omega)$ for Lebesgue measure on $\Omega$.  (Actually, one might want to put 
some restrictions on $\Omega$ and, perhaps, let $L^{2}_{a} (\Omega)$ be the closure of $H^\infty (\Omega)$, but we do not go into any detail here.)

For $\mathcal{R}$ a RKHM, a function $\psi \in \emph{O}(\Omega)$ is said to be a \emph{multiplier} for $\mathcal{R}$ if $\psi\mathcal{R} \subseteq \mathcal{R}$.  The set of multipiers,
$\mathcal{M}(\mathcal{R})$, forms a commutative Banach algebra which one can show is contained in $H^{\infty}(\Omega)$.
\bigskip

We can formulate a corona problem in the context of a $\mathcal{R}KHM \thickspace \mathcal{R}$ as follows:

\begin{itemize}
 \item The $n$-tuple $\{ \varphi_i\}^{n}_{i=1} \subseteq \mathcal{M}(\mathcal{R})$ satisfies (0) or
  $$\sum^{n}_{\i=1} |\varphi_i (\omega)|^2 \geq \epsilon^2 >0\text{, for }\omega \in \Omega.$$

 \item Statement (1) holds or there exists $\{ \psi_i \}^{n}_{i =1} \subseteq \mathcal{M}(\mathcal{R})$ such that $$\sum^{n}_{i=1} \varphi_i (\omega) \psi_i (\omega) =1$$ for 
 $\omega \in \Omega$.
\end{itemize}
\noindent The corona problem for $\mathcal{M}(\mathcal{R})$ asks if (0) implies (1).  As before, (1) implies (0).

An immediate connection with operator theory concerns the Hilbert module sequence
\begin{itemize} 
\item [(2)] $ 0 \to \mathcal{R} \xrightarrow{M_\Phi} \mathcal{R} \otimes \mathbb{C}^n \xrightarrow{\pi_\Phi} \mathcal{R}_\Phi \to 0,$
\end{itemize}

\noindent which is exact if range $M_\Phi$ is closed, where $\mathcal{R}_\Phi$ is the quotient Hilbert module $\mathcal{R} / \text{ range } M_\Phi$ and $\pi_\Phi$ is the quotient map.  If (1) holds, 
then range $M_\Phi$ is closed.  Here, $M_\Phi :\mathcal{R}\to \mathcal{R}\otimes \mathbb{C}^n$ is defined 
$$\mathcal{M}_\Phi f=\sum^{n}_{i=1} \varphi_i f\otimes e_i,$$ where $\{e_i \}^{n}_{i=1}$ is the standard orthonormal basis for $\mathbb{C}^n$.

We will call a RKHM $\mathcal{R}$ \emph{subnormal} if there exists a probability measure $\mu$ on $\bar{\Omega}$, the closure of $\Omega$ in $\mathbb{C}^m$, such that 
$\mathcal{R} \subseteq L^2 (\mu)$ isometrically.  Although many RKHM are subnormal, not all are.  Perhaps the simplist example of this phenomenon is the Dirichlet space on $\mathbb{D}$.  
Another family of recent interest is the Drury-Arveson space $H^{2}_{m}$ on $\mathbb{B}^m$ with kernel function $(1-\left< z,\omega \right>_{\mathbb{C}^m})^{-1}$.  An alternate description of 
$H^{2}_{m}$ is that it is the symmetric Fock space.

The first question concerning the exactness of (2) is the exactness at $\mathcal{R}$, the left-most module.  If $\Omega$ is connected and $\Phi \neq 0$, then $M_\Phi$ is one-to-one.  Hence, 
exactness comes down to the closure of the range of $M_\Phi$.  This issue is transparent in the subnormal case.

\begin{pro}
 For $\{ \varphi_i\}^{n}_{i=1} \subseteq \mathcal{M}(\mathcal{R})$ with $\mathcal{R}$ a subnormal RKHM on $\Omega$, (0) implies that range $M_\Phi$ is closed or that the sequence $(2)$ 
 is exact at $\mathcal{R}$ and hence (2) is a short exact sequence.
\end{pro}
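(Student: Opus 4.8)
The plan is to deduce that condition (0) forces $M_\Phi$ to be bounded below; this suffices, because $M_\Phi$ is already one-to-one ($\Omega$ is connected and (0) forces $\Phi\neq 0$), a bounded-below Hilbert-space operator has closed range, and once $\mathrm{range}\,M_\Phi$ is closed the sequence (2) is automatically a short exact sequence of Hilbert modules: $\mathcal{R}_\Phi = (\mathcal{R}\otimes\mathbb{C}^n)/\mathrm{range}\,M_\Phi$ is then a genuine quotient module, $\pi_\Phi$ is the surjective quotient map, exactness at $\mathcal{R}_\Phi$ is free, and exactness in the middle is the very definition of $\mathcal{R}_\Phi$. So the whole content reduces to exhibiting a constant $c>0$ with $\|M_\Phi f\|^2\geq c\,\|f\|^2$ for all $f\in\mathcal{R}$, and I will obtain $c=\epsilon^2$.

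First I would expand, using the definition of $M_\Phi$ and the orthonormality of the $e_i$,
$$\|M_\Phi f\|^2_{\mathcal{R}\otimes\mathbb{C}^n} = \Big\|\sum_{i=1}^n (\varphi_i f)\otimes e_i\Big\|^2 = \sum_{i=1}^n \|\varphi_i f\|^2_{\mathcal{R}}.$$
Subnormality enters here. Fix a probability measure $\mu$ on $\bar{\Omega}$ with $\mathcal{R}\subseteq L^2(\mu)$ isometrically; this embedding carries the module action, so multiplication by each coordinate on $\mathcal{R}$ is the restriction of the corresponding (commuting, normal) multiplication on $L^2(\mu)$, and each multiplier $\varphi_i\in\mathcal{M}(\mathcal{R})\subseteq H^\infty(\Omega)$ acts on $\mathcal{R}$, inside $L^2(\mu)$, as multiplication by a bounded $\mu$-measurable function $\tilde\varphi_i$ that agrees with $\varphi_i$ on $\Omega$ (and with the boundary values furnished by the minimal normal extension of $\mathcal{R}$ on the part of $\mathrm{supp}\,\mu$ outside $\Omega$). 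Hence
$$\sum_{i=1}^n \|\varphi_i f\|^2_{\mathcal{R}} = \sum_{i=1}^n \int_{\bar{\Omega}} |\tilde\varphi_i|^2\,|f|^2\,d\mu = \int_{\bar{\Omega}} \Big(\sum_{i=1}^n |\tilde\varphi_i|^2\Big)|f|^2\,d\mu.$$

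To finish I would invoke (0): since $\sum_{i=1}^n|\varphi_i|^2\geq\epsilon^2$ on $\Omega$, the same bound holds for $\sum_{i=1}^n|\tilde\varphi_i|^2$ $\mu$-almost everywhere on $\bar{\Omega}$ — trivially on $\Omega$, and by passing to boundary limits on the rest of $\mathrm{supp}\,\mu$ — so the last integral is at least $\epsilon^2\int_{\bar{\Omega}}|f|^2\,d\mu = \epsilon^2\|f\|^2_{\mathcal{R}}$, giving $\|M_\Phi f\|\geq\epsilon\|f\|$ and thus closed range, i.e.\ exactness of (2) at $\mathcal{R}$ and short exactness of (2). The step I expect to be the real obstacle — the only place where more than bookkeeping occurs — is the middle one: identifying the action of the multiplier $\varphi_i$ on $\mathcal{R}\subseteq L^2(\mu)$ with honest multiplication by an $L^\infty(\mu)$ function, and then transporting the lower bound (0) from $\Omega$ to $\mu$-a.e.\ on $\bar{\Omega}$. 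For the Bergman space ($\mu$ area measure on $\Omega$) there is nothing to check; for the Hardy space on $\mathbb{B}^m$ one uses non-tangential boundary values; in general one reads it off the minimal normal extension, whose joint spectrum lies in $\bar{\Omega}$. Everything else is the two displayed identities together with the elementary fact that a bounded-below operator has closed range.
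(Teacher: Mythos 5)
Your proof is correct and follows essentially the same route as the paper: expand $\|M_\Phi f\|^2=\sum_{i=1}^n\|\varphi_i f\|^2$, use the isometric embedding $\mathcal{R}\subseteq L^2(\mu)$ to rewrite this as $\int_{\bar{\Omega}}\bigl(\sum_{i=1}^n|\varphi_i|^2\bigr)|f|^2\,d\mu\geq\epsilon^2\|f\|^2$, and conclude that $M_\Phi$ is bounded below, hence has closed range and (2) is short exact. The only difference is that you make explicit (via the minimal normal extension and boundary values) the identification of the multiplier action with multiplication by an $L^\infty(\mu)$ function and the transfer of the bound (0) to $\mu$-a.e.\ on $\bar{\Omega}$, a point the paper's one-line computation leaves implicit.
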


\begin{proof}
 For $f\in \mathcal{R}$ we have
 \begin{equation*}
  ||M_\Phi f||^2 = \sum^{n}_{i=1}||\varphi_i f||^2 = \int_{\bar{\Omega}} \sum^{n}_{i=1} |\varphi_i f|^2 d\mu = \int_{\bar{\Omega}} \left( \sum^{n}_{i=1} |\varphi_i |^2 \right) 
  |f|^2 d\mu \geq \epsilon^2 ||f||^2,
 \end{equation*}
\noindent where $\mathcal{R} \subseteq L^2 (\mu)$ for the probability measure $\mu$ on $\bar{\Omega}$.  Thus $M_\Phi$ is bounded below and hence has closed range.
\end{proof}

\begin{rem}\label{3.2}
 Note that the assumption that range $M_\Phi$ is closed implies that $M^{*}_{\Phi}$ is onto and vice-versa.  
\end{rem}

The relation between the module sequence $(2)$ and the corona problem is strong.

\begin{rem}\label{3.3}
 Suppose $\{ \psi_i \}^{n}_{i=1} \subseteq \mathcal{M}(\mathcal{R})$ satisfies $$\sum^{n}_{i=1} \varphi_i \psi_i =1.$$  
\end{rem}

\noindent Then, as mentioned above, this implies (0).  If one defines the operator $N_\Psi:\mathcal{R}\otimes \mathbb{C}^n \to \mathcal{R}$ such that 
$$N_\Psi (\sum^{n}_{i=1} f_i \otimes e_i) = \sum^{n}_{i=1} \psi_i f_i \text{ for }\sum^{n}_{i=1} f_i \otimes e_i \in \mathcal{R}\otimes \mathbb{C}^n ,$$ then $N_\Psi M_\Phi=I_{\mathcal{R}}$ and
hence range $M_\Psi$ is closed.  Thus the failure of range $M_\Phi$ to be closed is an obstruction to an affirmative answer to the corona problem on $\mathcal{M} (\mathcal{R})$.  

We can ask further whether the closeness of range $M_\Phi$ implies (1).  The question is especially relevant for the non-subnormal case.  (We will have more to say about this matter in Section
8.)

To investigate questions such as this one, it is useful to know what module maps look like between modules of the form $\mathcal{R} \otimes \mathbb{C}^k$ and 
$\mathcal{R} \otimes \mathbb{C}^l$ for positive integers $k$ and $l$.

\begin{lem}\label{3.4}
 For positive integers $k,l$, a module map $X:\mathcal{R} \otimes \mathbb{C}^k \to \mathcal{R} \otimes \mathbb{C}^l$ (that is, one that satisfies 
 $(M_\psi \otimes I_{\mathbb{C}^l})X=X(M_\psi \otimes I_{\mathbb{C}^k})$ for $\psi \in \mathcal{M}(\mathcal{R})$) iff there exists 
 $\{ \varphi_{ij} \}^{l}_{j=1} {}^{k}_{i=1} \subseteq \mathcal{M}(\mathcal{R})$ such that $$X \left( \sum^{k}_{i=1} f_i \otimes e_i \right)=\sum^{l}_{j=1} \sum^{k}_{i=1} M_{\varphi_{ij}} 
 f_i \otimes e_j$$ for $$\sum^{k}_{i=1} f_i \otimes e_i \in \mathcal{R} \otimes \mathbb{C}^k.$$
\end{lem}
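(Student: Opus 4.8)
The plan is to peel $X$ apart into its ``matrix of operators'' form and then reduce everything to the scalar case $k=l=1$, where one shows that a bounded module endomorphism of $\mathcal{R}$ is multiplication by the element of $\mathcal{M}(\mathcal{R})$ it assigns to the constant function $1$. The ``if'' direction is routine: if $X$ is given by the displayed formula, then each $M_{\varphi_{ij}}$ is bounded and commutes with every $M_\psi$, $\psi\in\mathcal{M}(\mathcal{R})$, because $\mathcal{M}(\mathcal{R})$ is commutative; comparing $(M_\psi\otimes I_{\mathbb{C}^l})X$ with $X(M_\psi\otimes I_{\mathbb{C}^k})$ entry by entry then gives the intertwining identity, and $X$ is bounded as a finite array of bounded operators.

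For the converse I would fix the standard bases $\{e_i\}_{i=1}^k$ of $\mathbb{C}^k$ and $\{e_j\}_{j=1}^l$ of $\mathbb{C}^l$, let $V_i\colon\mathcal{R}\to\mathcal{R}\otimes\mathbb{C}^k$ be $V_if=f\otimes e_i$, and let $P_j\colon\mathcal{R}\otimes\mathbb{C}^l\to\mathcal{R}$ be the $j$-th coordinate projection; set $X_{ji}:=P_jXV_i$, a bounded operator on $\mathcal{R}$, so that $X(\sum_i f_i\otimes e_i)=\sum_j\big(\sum_i X_{ji}f_i\big)\otimes e_j$. Since $M_\psi\otimes I$ respects these coordinate decompositions, i.e. $P_j(M_\psi\otimes I_{\mathbb{C}^l})=M_\psi P_j$ and $(M_\psi\otimes I_{\mathbb{C}^k})V_i=V_iM_\psi$, the hypothesis that $X$ is a module map is equivalent to $M_\psi X_{ji}=X_{ji}M_\psi$ for every $\psi\in\mathcal{M}(\mathcal{R})$ and all $i,j$. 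Thus it suffices to treat $k=l=1$: show that $T\in\mathcal{L}(\mathcal{R})$ with $TM_\psi=M_\psi T$ for all $\psi\in\mathcal{M}(\mathcal{R})$ equals $M_\varphi$ for some $\varphi\in\mathcal{M}(\mathcal{R})$; one then reads off the lemma with $\varphi_{ij}$ the multiplier representing $X_{ji}$.

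For the scalar case, set $\varphi:=T1\in\mathcal{R}$, using the standing assumption $1\in\mathcal{R}$. For any $\psi\in\mathcal{M}(\mathcal{R})$ the intertwining relation gives $T\psi=TM_\psi 1=M_\psi T1=\psi\varphi$; applied to polynomials, which are multipliers since $\mathcal{R}$ is an RKHM, this yields $Tp=\varphi p$ for all $p\in\mathbb{C}[z_1,\dots,z_m]$. Now take any $f\in\mathcal{R}$ and polynomials $p_n\to f$ in $\mathcal{R}$; convergence in $\mathcal{R}$ forces $p_n(\omega)\to f(\omega)$ for every $\omega\in\Omega$ by boundedness of $ev_\omega$, so $Tp_n=\varphi p_n\to\varphi f$ pointwise, while $Tp_n\to Tf$ in $\mathcal{R}$ and hence pointwise. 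Therefore $Tf=\varphi f$ on $\Omega$; in particular $\varphi f=Tf\in\mathcal{R}$ for every $f$, so $\varphi$ is a multiplier and $M_\varphi=T$, which is bounded. This settles the scalar case and with it the lemma.

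The step that carries the weight is the use, in the last paragraph, of density of $\mathbb{C}[z_1,\dots,z_m]$ in $\mathcal{R}$ — available for the reproducing kernel Hilbert modules of interest here, but not for a completely arbitrary RKHM. When it is unavailable one argues instead with reproducing kernels: $T^*$ commutes with every $M_\psi^*$, and since $M_{z_i}^*k_\omega=\bar\omega_i k_\omega$ the vector $T^*k_\omega$ lies in the joint eigenspace $\bigcap_{i=1}^m\ker(M_{z_i}^*-\bar\omega_i I)$. The crux then becomes showing this eigenspace is one-dimensional, equivalently that $\overline{\sum_i(z_i-\omega_i)\mathcal{R}}=\ker ev_\omega$, which is exactly where polynomial density, or a sharpness property of the kernel, enters; granting it, $T^*k_\omega=\overline{\varphi(\omega)}\,k_\omega$ for a scalar $\varphi(\omega)$, and since the kernel functions $\{k_\omega:\omega\in\Omega\}$ always span $\mathcal{R}$, one recovers $T=M_\varphi$ as before.
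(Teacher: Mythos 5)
Your ``if'' direction and the reduction of $X$ to the operator matrix $(X_{ji})=(P_jXV_i)$, each entry of which commutes with every $M_\psi$, are correct and are evidently what the paper dismisses as ``standard linear algebra calculations''; since the paper offers no more than that one line, the real content of your write-up is the scalar case, and that is where there is a gap. Your main argument for the scalar case rests on density of $\mathbb{C}[z_1,\dots,z_m]$ in $\mathcal{R}$, but the standing assumptions on a RKHM in Section 3 are only that $1\in\mathcal{R}$ and $z_i\mathcal{R}\subseteq\mathcal{R}$, and the paper says explicitly that polynomials are in general \emph{not} dense in $\mathcal{R}$. So as written your argument establishes the lemma only for a restricted class of modules, not in the generality in which it is stated and later used. (Note that your computation $T\psi=TM_\psi 1=M_\psi T1=\varphi\psi$ holds for every $\psi\in\mathcal{M}(\mathcal{R})$, so what you actually need is density of the multiplier algebra in $\mathcal{R}$ --- a weaker and more natural hypothesis than polynomial density, but still not one granted by the paper's axioms.)

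Your fallback via reproducing kernels identifies the right general mechanism --- $T^*k_\omega$ lies in the joint eigenspace of the operators $M_\psi^*$ at $\omega$ --- but you then openly assume the one-dimensionality of that eigenspace, equivalently that $\ker ev_\omega$ is the closed span of $\{(\psi-\psi(\omega))f:\psi\in\mathcal{M}(\mathcal{R}),\,f\in\mathcal{R}\}$; that is precisely the nontrivial point, and it does not follow from the RKHM axioms as stated. Hence the proposal proves the lemma only under an added hypothesis (density of multipliers/polynomials in $\mathcal{R}$, or the sharpness property of the kernel), and to close the argument you must either verify one of these properties for the modules in question or build it into the statement. In fairness, the paper's own proof (an appeal to standard calculations and to the literature) glosses over the same issue, but a self-contained proof should either supply this step or make the required assumption explicit.
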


\begin{proof}
 Standard linear algebra calculations yield the $\{ \varphi_{ij} \}^{k}_{i=1} {}^{l}_{j=1} \subseteq \mathcal{M}(\mathcal{R}).$
\end{proof}

Although this result was doubtless known to many, one can find it in [6].

\begin{pro}\label{3.5}
 Let $\{ \varphi_i \}^{n}_{i=1} \subseteq \mathcal{M}(\mathcal{R})$ for the RKHM $\mathcal{R}$ over $\Omega \subseteq \mathbb{C}^m$ such that $M_\Phi$ has closed range and hence (2) is exact.
 Then the following statements are equivalent to (1):
 \begin{itemize}
  \item[(3)] $M_\Phi$ has a left module inverse $N_\Psi : \mathcal{R} \otimes \mathbb{C}^n \to \mathcal{R}$. 
  \item[(4)] There exists a right module inverse $\sigma_\Phi$ for $\pi_\Phi$.
  \item[(5)] There exists a module idempotent $E$ on $\mathcal{R} \otimes \mathbb{C}^n$ with range $E= \text{ range }M_\Phi$.
 \end{itemize}
\end{pro}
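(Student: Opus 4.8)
The plan is to prove the cycle of implications $(1)\Rightarrow(3)\Rightarrow(5)\Rightarrow(4)\Rightarrow(3)\Rightarrow(1)$, which renders all four statements equivalent. Two ingredients do most of the work: Lemma~\ref{3.4} in the case $l=1$, which identifies a module map $N\colon\mathcal{R}\otimes\mathbb{C}^n\to\mathcal{R}$ with an $n$-tuple $\{\psi_i\}_{i=1}^n\subseteq\mathcal{M}(\mathcal{R})$ of multipliers through $N\bigl(\sum_i f_i\otimes e_i\bigr)=\sum_i\psi_i f_i$; and the hypothesis that range $M_\Phi$ is closed, which makes $\mathcal{R}_\Phi$ a Hilbert module, makes $M_\Phi$ a bounded-below embedding, and---via the open mapping theorem---lets one produce bounded sections of the quotient map $\pi_\Phi$.

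For $(1)\Leftrightarrow(3)$ I would argue as follows. If $\{\psi_i\}$ satisfies $\sum_i\varphi_i\psi_i=1$, then the operator $N_\Psi$ of Remark~\ref{3.3} is a module map by Lemma~\ref{3.4}, and $N_\Psi M_\Phi f=\bigl(\sum_i\psi_i\varphi_i\bigr)f=f$, so it is a left module inverse. Conversely, given a left module inverse $N_\Psi$, Lemma~\ref{3.4} presents it via multipliers $\{\psi_i\}$, and evaluating $N_\Psi M_\Phi=I_{\mathcal{R}}$ at the function $1\in\mathcal{R}$ yields $\sum_i\psi_i\varphi_i=1$, which is (1). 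For $(3)\Rightarrow(5)$: set $E=M_\Phi N_\Psi$; it is a module map, $E^2=M_\Phi(N_\Psi M_\Phi)N_\Psi=M_\Phi N_\Psi=E$, and $E$ maps into range $M_\Phi$ while fixing it pointwise, so $\text{range}\,E=\text{range}\,M_\Phi$. For $(5)\Rightarrow(4)$: since $\text{range}\,E=\text{range}\,M_\Phi=\ker\pi_\Phi$ and $E$ is idempotent, $\pi_\Phi$ restricts to a bounded bijection of the closed submodule $\ker E=\text{range}(I-E)$ onto $\mathcal{R}_\Phi$, whose inverse composed with the inclusion is a bounded module map $\sigma_\Phi$ with $\pi_\Phi\sigma_\Phi=I_{\mathcal{R}_\Phi}$. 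For $(4)\Rightarrow(3)$: set $E=I-\sigma_\Phi\pi_\Phi$, a module idempotent with $\text{range}\,E=\ker\pi_\Phi=\text{range}\,M_\Phi$; since $M_\Phi$ is injective with closed range it has a bounded module inverse $M_\Phi^{-1}$ on $\text{range}\,M_\Phi$, and then $N_\Psi:=M_\Phi^{-1}E$ is a module map satisfying $N_\Psi M_\Phi=I_{\mathcal{R}}$.

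I expect no essential difficulty here: the steps are short diagram chases amounting to the usual splitting lemma for a short exact sequence. The points that need a little care are analytic rather than algebraic. First, one must check that every operator produced abstractly---the section $\sigma_\Phi$, the inverse $M_\Phi^{-1}$, and the idempotents---is at once bounded and a module map, and this is precisely where the closed-range hypothesis and the open mapping theorem are invoked. Second, the passage between (3) and (1) relies on Lemma~\ref{3.4} together with the standing assumption $1\in\mathcal{R}$ to upgrade $\sum_i\psi_i\varphi_i=1$ from an identity of operators on $\mathcal{R}$ to an identity of holomorphic functions on $\Omega$. If anything is the ``hard part,'' it is just the bookkeeping that keeps the module structure and the Hilbert-space topology aligned throughout.
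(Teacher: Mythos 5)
Your proposal is correct and follows essentially the same route as the paper: Lemma~\ref{3.4} to identify left module inverses with multiplier tuples (giving $(1)\Leftrightarrow(3)$), $E=M_\Phi N_\Psi$ for $(3)\Rightarrow(5)$, and the complementary idempotent together with the bounded inverse of $M_\Phi$ on its closed range for the passage through $(4)$ back to $(3)$. Your rendering of $(5)\Rightarrow(4)$ by restricting $\pi_\Phi$ to $\ker E$ is just a slightly more careful version of the paper's section $Y$, so there is nothing genuinely different to report.
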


\begin{proof}
 For a short exact sequence of modules in the algebra category, (3), (4) and (5) are always equivalent and this fact carries over to our context.  But let us provide a complete proof since the
 techniques are relevant to later issues.

If $X$ is a left module inverse for $M_\Phi$, then by Lemma 3.4 there exists $\{ \psi_i \}^{n}_{i=1} \subseteq \mathcal{M}(\mathcal{R})$ such that $X=N_\Psi$.  Moreover, $XM_\Phi=I_\mathcal{R}$
implies that $$\sum^{n}_{i=1} \varphi_i \psi_i =1.$$  Hence, $N_\Psi M_\Phi =I_\mathcal{R}$ and we see that (1) and (3) are equivalent.

If $X$ is a left module inverse for $M_\Phi$, then $E=M_\Phi X$ is a module idempotent on $\mathcal{R} \otimes \mathbb{C}^n$ with range $E=\text{ range } M_\Phi$.  Therefore, (3) implies (5)
and one can define the right module inverse $Y=E\pi^{-1}_{\Phi} : \mathcal{R}_\Phi \to \mathcal{R} \otimes \mathbb{C}^n$.  Conversely, if $Y$ is a right module inverse for $\pi_\Phi$, then 
$E' =Y\pi_\Phi$ is a module idempotent on $\mathcal{R} \otimes \mathbb{C}^n$ and we can define the left module inverse 
$X=M^{-1}_{\Phi} (I_{\mathcal{R}\otimes \mathbb{C}^n} -E'): \mathcal{R}\otimes \mathbb{C}^n \to \mathcal{R}$ for $M_\Phi$ since range 
$(I_{\mathcal{R}\otimes \mathbb{C}^n} -E') = \text{ range } M_\Phi$.  But the existence of $E'$ is enough to define $X$ or (5) implies (3) and this completes the proof.
\end{proof}

\begin{rem}\label{3.6}
 Note that in these constructions, $E+E' = I_{\mathcal{R}\otimes \mathbb{C}^n}$.
\end{rem}

\begin{rem}\label{3.7}
 Note that since $\pi_\Psi$ is onto, it always has a right inverse.  However, a right inverse, in general, is not a module map.
\end{rem}

\section{Module Idempotents and Sub-Bundles}

The previous result makes clear the importance of constructing or identifying module idempotents on $\mathcal{R} \otimes \mathbb{C}^n$ for $\mathcal{R}$ a RKHM over a domain 
$\Omega \subseteq \mathbb{C}^m$ with range equal to range $M_\Phi$ for $\Phi$ given by $\{ \varphi_i \}^{n}_{i=1} \subseteq \mathcal{M}(\mathcal{R})$ assuming range $M_\Phi$ is closed.  If $E$ 
is such a module idempotent on $\mathcal{R} \otimes \mathbb{C}^n$, then by Lemma 3.4 there exists $\{ \sigma_{ij} \}^{n}_{i=1} {}^{n}_{j=1} \subseteq \mathcal{M}(\mathcal{R})$ such that 
$$E \left( \sum^{n}_{i=1} f_i \otimes e_i \right) = \sum^{n}_{i=1} \sum^{n}_{j=1} M_{\sigma_{ij}} f_i \otimes e_j \text{ for } \sum^{n}_{i=1} f_i \otimes e_i \in \mathcal{R} \otimes 
\mathbb{C}^n.$$
If for $\omega \in \Omega$ we set $E(\omega) = \sigma_{ij} (\omega)^{n}_{i=1} {}^{n}_{j=1} \in \mathcal{L}(\mathbb{C}^n)$, then the fact that $E^2 =E$ implies $E(\omega)^2 =E(\omega)$ and vice 
versa.

Since the $\sigma_{ij}$ are holomorphic functions and multipliers for $\mathcal{R}$, it follows that $E(\omega)$ is holomorphic (and more) on $\Omega$.  The range of $E(\omega)$ defines a holomorphic 
sub-bundle $\hat{E}$ of $\Omega \times \mathbb{C}^n$ with fiber equal to range $E(\omega)$ at $\omega \in \Omega$.  Moreover, $E^{'}=I-E$ is the complementary idempotent and it defines a 
complementary sub-bundle $\hat{E}^{'}$ of $\Omega \times \mathbb{C}^n$.  That is, $\Omega \times \mathbb{C}^n = \hat{E}^{'} \dotplus \hat{E}$, where ``$\dotplus$'' denotes a (skew) linear 
direct sum of vector bundles.  In particular, $\hat{E}$ and $\hat{E}^{'}$ are not necessarily orthogonal in $\Omega \times \mathbb{C}^n$.

The converse is also true but we must be careful to state it correctly, which involves a mixture of operator theory and complex geometry.

\begin{pro}\label{4.1}
 For $\mathcal{R}$ a RKHM over $\Omega \subset \mathbb{C}^m$ and a submodule $\mathcal{M}$ of $\mathcal{R} \otimes \mathbb{C}^n$,
 the following are equivalent:
 \begin{itemize}
  \item [(i)] There exists a submodule $\mathcal{N}$ of $\mathcal{R} \otimes \mathbb{C}^n$ such that $\mathcal{R} \otimes \mathbb{C}^n = \mathcal{M} \dotplus \mathcal{N}$.
  \item [(ii)] There exists a module idempotent $E$ on $\mathcal{R} \otimes \mathbb{C}^n$ such that $\mathcal{M}=\text{range } E$.
 \end{itemize}
\end{pro}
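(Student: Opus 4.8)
The plan is to establish the two implications separately; essentially all the work lies in (i) $\Rightarrow$ (ii), and within that, in a single boundedness argument.

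For (ii) $\Rightarrow$ (i), suppose $E$ is a module idempotent on $\mathcal{R} \otimes \mathbb{C}^n$ with range $E = \mathcal{M}$. I would take $\mathcal{N} = \text{range}(I - E)$. Since $I - E$ is again a bounded module map, $\mathcal{N}$ is closed (it equals $\ker E$) and invariant under $M_\psi \otimes I$ for every $\psi \in \mathcal{M}(\mathcal{R})$, hence a submodule. The decomposition $f = Ef + (I-E)f$ shows $\mathcal{R} \otimes \mathbb{C}^n = \mathcal{M} + \mathcal{N}$, while $\text{range } E \cap \text{range}(I-E) = \{0\}$ is immediate from $E^2 = E$; so $\mathcal{R} \otimes \mathbb{C}^n = \mathcal{M} \dotplus \mathcal{N}$.

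For (i) $\Rightarrow$ (ii), suppose $\mathcal{R} \otimes \mathbb{C}^n = \mathcal{M} \dotplus \mathcal{N}$ with $\mathcal{M}$, $\mathcal{N}$ closed submodules, and write each $f \in \mathcal{R} \otimes \mathbb{C}^n$ uniquely as $f = f_{\mathcal{M}} + f_{\mathcal{N}}$ with $f_{\mathcal{M}} \in \mathcal{M}$ and $f_{\mathcal{N}} \in \mathcal{N}$. Define $E$ by $Ef = f_{\mathcal{M}}$. Then $E^2 = E$ and $\text{range } E = \mathcal{M}$ are formal consequences of uniqueness. To see that $E$ is a module map, note that for $\psi \in \mathcal{M}(\mathcal{R})$ the operator $M_\psi \otimes I$ carries $\mathcal{M}$ into $\mathcal{M}$ and $\mathcal{N}$ into $\mathcal{N}$, since both are submodules; hence $(M_\psi \otimes I)f = (M_\psi \otimes I)f_{\mathcal{M}} + (M_\psi \otimes I)f_{\mathcal{N}}$ is exactly the decomposition of $(M_\psi \otimes I)f$, so $E(M_\psi \otimes I)f = (M_\psi \otimes I)f_{\mathcal{M}} = (M_\psi \otimes I)Ef$.

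The one genuine step, and the main obstacle, is the boundedness of $E$ --- without it, $E$ is not an operator on $\mathcal{R} \otimes \mathbb{C}^n$ in the required sense. I would obtain it from the closed graph theorem: if $f_k \to f$ and $Ef_k \to g$, then $g \in \mathcal{M}$ because $\mathcal{M}$ is closed and $Ef_k \in \mathcal{M}$, while $(I-E)f_k = f_k - Ef_k \to f - g$ forces $f - g \in \mathcal{N}$ because $\mathcal{N}$ is closed; by uniqueness of the decomposition of $f$ this yields $Ef = g$, so $E$ has closed graph and is therefore bounded. (Equivalently, one may cite the standard fact that an algebraic direct sum of two closed subspaces of a Hilbert space is automatically a topological direct sum.) Finally, with $E$ a bounded module idempotent whose range is $\mathcal{M}$, Lemma 3.4 applies and represents $E$ by a matrix $\{\sigma_{ij}\} \subseteq \mathcal{M}(\mathcal{R})$, the concrete form used throughout Section 4.
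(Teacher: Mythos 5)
Your proof is correct and follows the same route as the paper: the paper's (sketched) proof also takes $\mathcal{N}=\operatorname{range}(I-E)$ for (ii)$\Rightarrow$(i), defines $E$ via the unique decomposition $x=y_1+y_2$ for (i)$\Rightarrow$(ii), and invokes the closed graph theorem for boundedness. You have simply filled in the routine verifications (module-map property, the closed-graph argument) that the paper leaves implicit.
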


\begin{proof}
 The proof follows basically from algebra and by invoking the closed graph theorem to conclude the boundedness of $E$.  In particular, if (ii) holds, then setting 
 $\mathcal{N} = (I_{\mathcal{R}\otimes \mathbb{C}^n}-E) \mathcal{R} \otimes \mathbb{C}^n$ yields the complementary submodule $\mathcal{N}$.  Conversely, if (i) holds, then one can define an idempotent $E$ with range 
 $\mathcal{M}$ by setting $Ex=y_1$, where $x=y_1 + y_2$ is the unique decomposition of $x$ with $y_1 \in \mathcal{M}$ and $y_2 \in \mathcal{N}$.
\end{proof}

\begin{rem}\label{4.2}
 If $\mathcal{R} \otimes \mathbb{C}^n = \mathcal{M} \dotplus \mathcal{N}$, then one can define sub-bundles $E$ and $F$ of $\Omega \times \mathbb{C}^n$ such that 
 $\Omega \times \mathbb{C}^n = E \dotplus F$ and, most important,
 $$ \mathcal{M}= \{ f \in \mathcal{R} \otimes \mathbb{C}^n : f(\omega) \in E(\omega), \quad \omega \in \Omega \},$$
 where $E(\omega)$ is the fiber of $E$ at $\omega \in \Omega$.  The same is true for $F$ and $\mathcal{N}$.  However, there is no simple converse to this relationship.  In particular, if one
 expresses $\Omega \times \mathbb{C}^n = E \dotplus F$, where $E$ and $F$ are holomorphic sub-bundles, it need not be the case that
 \begin{itemize}
  \item [(iii)] $\mathcal{R} \otimes \mathbb{C}^n = \{ f\in \mathcal{R} \otimes \mathbb{C}^n : f(\omega) \in E(\omega),\quad \omega \in \Omega \} \dotplus \{ f\in \mathcal{R} \otimes \mathbb{C}^n : 
  f(\omega) \in F(\omega),\quad \omega \in \Omega \}.$
  \end{itemize}
 If we add this assumption for sub-bundles $E$ and $F$ of $\Omega \times \mathbb{C}^n$, then (iii) is equivalent to (i) and (ii) of Proposition 4.1
\end{rem}

These relationships are at the heart of Nikolski's lemma as presented by Treil and Wick [10] in their approach to the corona theorem as follows.

We begin by placing their framework in the context of the short exact sequence $(2)$.  

Assume that $\{ \varphi_i \}^{n}_{i=1} \subseteq \mathcal{M}(\mathcal{R})$ for some RKHM $\mathcal{R}$ over $\Omega \subseteq \mathbb{C}^m$, satisfies (0).  For $\omega \in \Omega$ let 
$P(\omega)$ denote the orthogonal projection of $\mathbb{C}^n$ onto range $\Phi(\omega) \subset \mathbb{C}^n$.  Since (0) holds, rank $\Phi(\omega) =1$ for $\omega \in \Omega$ which implies 
that $\amalg_{\omega \in \Omega}$ range $P(\omega)$ defines a Hermitian holomorphic line bundle over $\Omega$.  \emph{However}, the function $P(\omega)$ is holomorphic only when it is constant.
But there are other idempotent-valued functions $E(\omega)$ on $\Omega$ with range $E(\omega)=\text{ range }P(\omega) = \text{ range }\Phi(\omega)$ for $\omega \in \Omega$ and one of them 
might be both bounded and holomorphic on $\Omega$ (and in $\mathcal{M}(\mathcal{R})$ which is what is required if $\mathcal{M}(\mathcal{R}) \neq H^\infty (\Omega)$ to define the module 
idempotent, denoted by $E$, needed in the proof of Proposition 3.5.  We capture this fact in the following statement.

\begin{pro}\label{4.3}
 For $\{ \varphi_i \}^{n}_{i=1} \subseteq \mathcal{M}(\mathcal{R})$ for the RKHM $\mathcal{R}$ over $\Omega \subseteq \mathbb{C}^n$ with $\mathcal{M}(\mathcal{R})=H^{\infty} (\Omega)$ 
 satisfying (0), statement (1) is equivalent to
 \begin{itemize}
  \item[(6)] There exists a bounded, real-analytic function $V(\omega): \Omega \to \mathcal{L}(\mathbb{C}^n)$ such that
  \begin{align*}
   \text{(a) } \quad &E(\omega) = P(\omega) + V(\omega) \text{ is bounded and holomorphic on } \Omega \text{ and}\\
   \text{(b) } \quad &V(\omega) = P(\omega) V(\omega) (I_{\mathbb{C}^n}-P(\omega)) \text{ for } \omega \in \Omega.
  \end{align*}
  \end{itemize}
\end{pro}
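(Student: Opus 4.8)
The plan is to use Proposition 3.5 to translate statement (1) into the existence of a module idempotent $E$ on $\mathcal{R} \otimes \mathbb{C}^n$ with range equal to range $M_\Phi$, and then to identify such module idempotents with their symbol functions $E(\omega) \in \mathcal{L}(\mathbb{C}^n)$ via Lemma 3.4, exploiting the hypothesis $\mathcal{M}(\mathcal{R}) = H^\infty(\Omega)$ so that ``holomorphic and bounded'' is exactly the condition for being a multiplier. Under (0), $M_\Phi$ is bounded below (indeed rank $\Phi(\omega) = 1$ throughout), so range $M_\Phi$ is closed and the short exact sequence (2) is available, and the symbol of any module idempotent with that range must be an idempotent-valued function $E(\omega)$ on $\Omega$ with range $E(\omega) = $ range $\Phi(\omega) = $ range $P(\omega)$.

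First I would prove (1) $\Rightarrow$ (6). Given (1), Proposition 3.5 yields a module idempotent $E$ on $\mathcal{R} \otimes \mathbb{C}^n$ with range $E = $ range $M_\Phi$; by Lemma 3.4 its symbol $E(\omega)$ is holomorphic, bounded (it lies in $\mathcal{M}(\mathcal{R}) = H^\infty(\Omega)$ entrywise), and satisfies range $E(\omega) = $ range $\Phi(\omega) = $ range $P(\omega)$ pointwise. Set $V(\omega) = E(\omega) - P(\omega)$; this is a difference of a bounded holomorphic (hence real-analytic) function and the real-analytic function $P(\omega)$, so it is bounded real-analytic, which is (a). For (b): since range $E(\omega) = $ range $P(\omega)$ we have $P(\omega) E(\omega) = E(\omega)$ and $E(\omega) P(\omega) = P(\omega)$ (the latter because $E(\omega)$ acts as the identity on its range, which contains range $P(\omega)$, and $P(\omega)$ has that range). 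Then
\begin{align*}
P(\omega) V(\omega) (I_{\mathbb{C}^n} - P(\omega)) &= P(\omega)(E(\omega) - P(\omega))(I - P(\omega)) \\
&= (E(\omega) - P(\omega))(I - P(\omega)) = E(\omega) - E(\omega)P(\omega) - P(\omega) + P(\omega) \\
&= E(\omega) - P(\omega) - P(\omega) + P(\omega) = V(\omega),
\end{align*}
using $P(\omega)E(\omega) = E(\omega)$ in the second line and $E(\omega)P(\omega) = P(\omega) = P(\omega)^2$ in the third; this is (b).

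Conversely, for (6) $\Rightarrow$ (1), suppose $V(\omega)$ is as in (6) and put $E(\omega) = P(\omega) + V(\omega)$, which by (a) is bounded and holomorphic, hence defines via Lemma 3.4 a module map $E$ on $\mathcal{R} \otimes \mathbb{C}^n$ (here the hypothesis $\mathcal{M}(\mathcal{R}) = H^\infty(\Omega)$ is exactly what licenses passing from the bounded holomorphic symbol to a genuine module operator). Condition (b) forces $E(\omega)^2 = E(\omega)$: expanding $(P + V)^2 = P + PV + VP + V^2$ and using $V = PV(I-P)$ one gets $PV = V$, $VP = 0$, and $V^2 = PV(I-P)PV(I-P) = 0$, so $(P+V)^2 = P + V$. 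Thus $E$ is a module idempotent, and its range at each $\omega$ is range $E(\omega) = $ range $\Phi(\omega)$ (since $E(\omega)$ fixes range $P(\omega) = $ range $\Phi(\omega)$ and, by $VP = 0$ and $PV = V$, range $E(\omega) \subseteq$ range $P(\omega)$). Hence range $E = $ range $M_\Phi$ as submodules of $\mathcal{R} \otimes \mathbb{C}^n$, which is statement (5) of Proposition 3.5, equivalent to (1).

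The main obstacle, and the point deserving the most care, is the back-and-forth between pointwise (fiberwise) statements and module-level statements: verifying that the symbol of a module idempotent with the prescribed range really does have range $E(\omega) = $ range $\Phi(\omega)$ for every $\omega$ — and conversely that a bounded holomorphic idempotent-valued symbol with the right fibers yields a module idempotent whose range, as a submodule, coincides with range $M_\Phi$ rather than merely agreeing fiberwise. This is precisely the subtlety flagged in Remark 4.2(iii), where fiberwise direct-sum decompositions of $\Omega \times \mathbb{C}^n$ need not lift to module decompositions; here it works because one direction is handled by Lemma 3.4 and the other by the evaluation maps $ev_\omega$ being enough to detect membership in range $M_\Phi$ when that range is closed. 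I would also make explicit that real-analyticity of $P(\omega)$ follows from the formula $P(\omega) = \Phi(\omega)(\Phi(\omega)^*\Phi(\omega))^{-1}\Phi(\omega)^*$, valid since (0) guarantees $\Phi(\omega)^*\Phi(\omega) = \sum_i |\varphi_i(\omega)|^2 \geq \epsilon^2 > 0$ is invertible.
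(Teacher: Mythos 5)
Your overall route is the same as the paper's: use Proposition 3.5 to convert (1) into the existence of a module idempotent $E$ on $\mathcal{R}\otimes\mathbb{C}^n$ with range equal to range $M_\Phi$, pass between the operator $E$ and its symbol $E(\omega)$ via Lemma 3.4 together with the hypothesis $\mathcal{M}(\mathcal{R})=H^\infty(\Omega)$, and check the pointwise algebra linking (a), (b), idempotency of $E(\omega)$, and range $E(\omega)=\operatorname{range}P(\omega)$. Your verifications that range equality gives (b), and that (b) gives $E(\omega)^2=E(\omega)$ with range $E(\omega)=\operatorname{range}\Phi(\omega)$, are correct and in fact more detailed than the printed proof.

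The one step I would not accept as written is your justification, in the direction (6) $\Rightarrow$ (1), of the submodule equality $\operatorname{range}M_E=\operatorname{range}M_\Phi$. The inclusion $\operatorname{range}M_\Phi\subseteq\operatorname{range}M_E$ is fine, since $E(\omega)\Phi(\omega)=\Phi(\omega)$ gives $M_EM_\Phi=M_\Phi$. But for the reverse inclusion you invoke the principle that ``the evaluation maps $ev_\omega$ are enough to detect membership in range $M_\Phi$ when that range is closed,'' i.e.\ that any $f\in\mathcal{R}\otimes\mathbb{C}^n$ with $f(\omega)\in\operatorname{range}\Phi(\omega)$ for all $\omega$ lies in $\operatorname{range}M_\Phi$. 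That is precisely the fiberwise-to-module passage that Remark 4.2 warns need not hold, and closedness of the range alone does not obviously deliver it; you give no argument, and if the principle were true in general much of the difficulty discussed in Section 3 would disappear. The honest repair, available here because the idempotent has rank one, is to factor the symbol: each column satisfies $E(\omega)e_k=\psi_k(\omega)\Phi(\omega)$ with $\psi_k$ holomorphic (locally $\psi_k=E_{jk}/\varphi_j$ wherever $\varphi_j\neq 0$) and $|\psi_k(\omega)|\leq \|E(\omega)\|/\epsilon$ by (0), so $\{\psi_k\}_{k=1}^n\subseteq H^\infty(\Omega)=\mathcal{M}(\mathcal{R})$ and $M_E=M_\Phi N_\Psi$, which yields $\operatorname{range}M_E\subseteq\operatorname{range}M_\Phi$; moreover $E(\omega)\Phi(\omega)=\Phi(\omega)$ with $\Phi(\omega)\neq 0$ forces $\sum_{i=1}^n\varphi_i\psi_i=1$, which is (1) directly, short-circuiting Proposition 3.5 altogether. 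To be fair, the paper's own proof simply asserts the range equality at this point, so your argument is no less complete than the printed one; but your stated reason is not a valid general principle, whereas the factorization is.
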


\begin{proof}
 If (1) holds, then by Proposition 3.5 there exists a module idempotent $E$ on $\mathcal{R}\otimes \mathbb{C}^n$ such that range $P(\omega) = \text{ range} M_\Phi=\text{ range }E(\omega)$.  But 
 the latter implies that range $P(\omega)=\text{ range} E(\omega)$ or that (a) holds.  Since $E(\omega)$ is an idempotent, it follows that $V(\omega)=E(\omega)-P(\omega)$ satisfies (b) which 
 concludes the proof that (1) implies (6).

Conversely, if such a function $V(\omega)$ exists, then by setting $E(\omega)=P(\omega)+V(\omega)$ we obtain a bounded holomorphic idempotent map on $\Omega$ such that range 
$E(\omega)= \text{ range } \Phi(\omega)$ for $\omega \in \Omega$.  Therefore, since $\mathcal{M}(\mathcal{R}) = H^\infty (\Omega)$, we can define a module idempotent $E=M_E$ such that range 
$E= \text{ range }M_\Phi$.  Returning to Proposition 3.5, we see that (4) holds which implies (1).
\end{proof}

\begin{rem}\label{4.4}
 Note that (1) implies (6) does not require that $\mathcal{M}(\mathcal{R}) = H^\infty (\Omega)$ but for the implication (6) implies (1), one needs somehow to conclude that the function 
 $E(\omega)$ defines a multiplier on $\mathcal{R}\otimes \mathbb{C}^n$.
\end{rem}

One can obtain another geometrical interpretation of this result by restating Proposition 4.1 on the existence of the idempotent $E$ in terms of a bounded, holomorphic module 
idempotent with range $M_\Phi$ .

\begin{pro}\label{4.5}
 Let $\{ \varphi_i \}^{n}_{i=1} \subseteq \mathcal{M}(\mathcal{R})$ for the RKHM $\mathcal{R}$ on $\Omega \subseteq \mathbb{C}^m$ satisfying (0) and let $E_\Phi$ be the holomorphic sub-bundle of
 $\Omega \times \mathbb{C}^n$ with fiber $E_\Phi (\omega)=\text{range} \Phi(\omega) \subseteq \mathbb{C}^n$.  Then (1) is equivalent to
 \begin{itemize}
  \item[(7)] There exists a complementary holomorphic sub-bundle $F$ of $\Omega \times \mathbb{c}^n$ such that 
     $\mathcal{R} \otimes \mathbb{C}^n = \{ f\in \mathcal{R} \otimes \mathbb{C}^n : f(\omega) \in E(\omega),\quad \omega \in \Omega \} \dotplus \{ f\in \mathcal{R} \otimes \mathbb{C}^n : 
     f(\omega) \in F(\omega), \quad \omega \in \Omega\}$ or
  \item[(8)] There exists a complementary submodule $\mathcal{S}$ of $\mathcal{R} \otimes \mathbb{C}^n$ such that $\mathcal{R}\otimes \mathbb{C}^n = \text{ range } M_\Phi + \mathcal{S}$.
 \end{itemize}
\end{pro}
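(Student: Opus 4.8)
The plan is to deduce both equivalences formally from Proposition \ref{3.5}, Proposition \ref{4.1} and Remark \ref{4.2}, the only point needing care being the identification of the holomorphic sub-bundle attached to $\text{range } M_\Phi$ with $E_\Phi$. First I would record that the closed-range hypothesis required to apply Proposition \ref{3.5} is available in every case at issue: (1) forces $\text{range } M_\Phi$ closed by Remark \ref{3.3}; in (8), read (as intended) as a topological direct sum with the submodule $\mathcal{S}$, the summand $\text{range } M_\Phi$ is in particular closed; and in (7) closedness will follow once $\text{range } M_\Phi$ is identified with $\{ f \in \mathcal{R} \otimes \mathbb{C}^n : f(\omega) \in E_\Phi(\omega),\ \omega \in \Omega \}$, which is manifestly a closed submodule.

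Granting closed range, the equivalence of (1) and (8) is immediate: by Proposition \ref{3.5}, (1) is equivalent to the existence of a module idempotent $E$ on $\mathcal{R} \otimes \mathbb{C}^n$ with $\text{range } E = \text{range } M_\Phi$, and by Proposition \ref{4.1}, applied with $\mathcal{M} = \text{range } M_\Phi$, the latter is equivalent to the existence of a complementary submodule $\mathcal{S}$ with $\mathcal{R} \otimes \mathbb{C}^n = \text{range } M_\Phi \dotplus \mathcal{S}$, that is, to (8); one passes from $E$ to $\mathcal{S} = (I - E)(\mathcal{R} \otimes \mathbb{C}^n)$ and back through the idempotent furnished by Proposition \ref{4.1}.

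To connect (8) and (7) I would use Remark \ref{4.2}. Starting from the submodule splitting in (8), that remark produces holomorphic sub-bundles $E'$ and $F$ of $\Omega \times \mathbb{C}^n$ with $\Omega \times \mathbb{C}^n = E' \dotplus F$, $\text{range } M_\Phi = \{ f : f(\omega) \in E'(\omega) \}$ and $\mathcal{S} = \{ f : f(\omega) \in F(\omega) \}$. Because $1 \in \mathcal{R}$, the element $M_\Phi 1$, whose value at $\omega$ is $\Phi(\omega)$, lies in $\text{range } M_\Phi$, so $\Phi(\omega) \in E'(\omega)$; conversely every element of $\text{range } M_\Phi$ has the form $\Phi p$ and so takes its value at $\omega$ in $\mathbb{C}\Phi(\omega)$, while (0) guarantees $\Phi(\omega) \neq 0$. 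Hence $E'(\omega) = \mathbb{C}\Phi(\omega) = E_\Phi(\omega)$, so $E' = E_\Phi$ and (7) holds with this $F$. In the other direction, the splitting in (7) is exactly the hypothesis (iii) of Remark \ref{4.2} with $E = E_\Phi$, so that remark yields a module idempotent onto $\{ f : f(\omega) \in E_\Phi(\omega) \}$, and hence --- once this submodule is identified with $\text{range } M_\Phi$ --- the complementary submodule demanded by (8).

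The identification $\{ f \in \mathcal{R} \otimes \mathbb{C}^n : f(\omega) \in E_\Phi(\omega) \} = \text{range } M_\Phi$ is the step I expect to be the crux, and it is an instance of the difficulty flagged in Remarks \ref{4.2} and \ref{4.4}. The inclusion $\supseteq$ is trivial, and since $E_\Phi$ is the trivial line bundle trivialized by the nonvanishing holomorphic section $\Phi$, the reverse inclusion is equivalent to showing that a holomorphic $h$ on $\Omega$ with $\varphi_i h \in \mathcal{R}$ for every $i$ must already lie in $\mathcal{R}$. Concretely, the module idempotent obtained above has pointwise values $E(\omega) = \Phi(\omega)\rho(\omega)^{\top}$ for a holomorphic $\rho : \Omega \to \mathbb{C}^n$ with $\sum_{i=1}^{n} \varphi_i(\omega)\rho_i(\omega) = 1$; by Lemma \ref{3.4} the entries of $E$ are multipliers, and (0) forces each $\rho_i$ to be bounded on $\Omega$, so what remains is to upgrade the $\rho_i$ from $H^{\infty}(\Omega)$ to $\mathcal{M}(\mathcal{R})$, whereupon $\psi_i := \rho_i$ solves (1). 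When $\mathcal{M}(\mathcal{R}) = H^{\infty}(\Omega)$ this is automatic; in general it needs the extra input isolated in Remark \ref{4.4}.
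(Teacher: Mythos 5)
Your proposal follows the route the paper itself intends: Proposition \ref{4.5} is stated without a written proof, as a restatement of Propositions \ref{3.5} and \ref{4.1} together with Remark \ref{4.2}, and your treatment of the equivalence of (1) and (8), and of the implication (1) $\Rightarrow$ (7), is correct and matches that intended argument. The crux you isolate in (7) $\Rightarrow$ (1) --- that Remark \ref{4.2} only gives a module idempotent whose range is the section space $\{ f \in \mathcal{R}\otimes\mathbb{C}^n : f(\omega)\in E_\Phi(\omega)\}$, so one must still identify this submodule with $\operatorname{range} M_\Phi$, equivalently upgrade the bounded holomorphic functions $\rho_i$ satisfying $\sum_{i=1}^n \varphi_i\rho_i =1$ from $H^\infty(\Omega)$ to $\mathcal{M}(\mathcal{R})$ --- is genuine, but it is a caveat attached to the statement rather than a defect of your reasoning: unlike Proposition \ref{4.3}, Proposition \ref{4.5} omits the hypothesis $\mathcal{M}(\mathcal{R})=H^\infty(\Omega)$, and Remark \ref{4.4} records precisely this issue. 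Under that hypothesis (or with the extra input flagged in Remark \ref{4.4}) your proof is complete and is essentially the paper's.
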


\section{Comparing the H\"ormander and Treil-Wick Approaches}

We discuss in this section a little more about how Treil-Wick use the framework outlined in the previous section to solve the corona problem for $H^\infty (\mathbb{D})$.

The approach of Treil-Wick is related to the earlier one due to H\"ormander [5].  In this case we work on $\mathcal{R} = L^{2}_{a} (m)$, the Bergman space for the Lebesque measure $m$ on the
unit ball $\mathbb{B}^m$.  If $\{ \varphi_i \}^{n}_{i=1} \subseteq H^\infty (\mathbb{B}^m)$ satisfy (0), then one can write down an $L^\infty (m)$-solution of the corona problem as follows:\\
One defines $$\theta_j (z) = \overline{\varphi_j (z)} / \sum^{n}_{i=1} |\varphi_i (z)|^2 \text{ for } z \in \mathbb{B}^m, \text{ and }j=1,2,...,n$$ so that 
$$\sum^{n}_{i=1} \varphi_i (z)\theta_i (z) = 1 \text{ for } z \in \mathbb{B}^n.$$  But as in the Nikolski lemma, this solution is unlikely to be an $n$-tuple of holomorphic functions.

In H\"ormander's approach, $C^2$-functions $\{\alpha_j\}^{n}_{j=1}$ on $\mathbb{B}^m$ are sought so that 
\begin{itemize}
 \item [(a)] $$\sum^{n}_{i=1} \varphi_i(z)\alpha_i (z)=0\text{ for }z\in \mathbb{B}^n$$ and
 \item [(b)] $\{ \psi_i \}^{n}_{i=1} \subseteq H^\infty (\mathbb{B}^n)$, where $\psi_i=\alpha_i+\theta_i$ for $i=1,...,n$.
\end{itemize}

\noindent This amounts to solving what is referred to as a ``$\bar{\partial}$-problem with $L^\infty$-bounds''.

Let us explore a little more the relationship between the two approaches.  Consider the operator $N_A (z) : \mathbb{C}^n \to \mathbb{C}$ defined for $z\in \mathbb{B}^n$ such that 
$$N_A (z)\boldsymbol{c} = \sum^{n}_{i=1} \sum^{n}_{i=1} \alpha_i (z)C_i \text{ for } \boldsymbol{c}=(c_1, ..., c_n) \in \mathbb{C}^n.$$  Next by defining 
$$N_A: L^2 (m)\otimes\mathbb{C}^n \to L^2 (m) \text{ such that }N_A \left( \sum^{n}_{i=1} f_i \otimes e_i \right)=\sum^{n}_{i=1} \alpha_i f_i \text{ for } 
\sum^{n}_{i=1}f_i \otimes e_i \in L^2 (m)^{\otimes} \mathbb{C}^n$$ and setting $\tilde{E}=M_\Phi (N_A +N_\Theta )$, we obtain a module idempotent $\tilde{E}$ on $L^2(m) \otimes \mathbb{C}^n$ 
such that $\tilde{E}(z)$ is an idempotent onto range $\Phi(z) = \text{ range }P(z)$ for $z\in \mathbb{B}^m$.  Thus range $\tilde{E}(z)=\text{range }P(z)$ for $z \in \mathbb{B}^m$ and thus one 
is seeking a modification $A(z)$ of $\Theta(z)$ so that $N_A + N_\Theta$ is bounded and $A(z)+\Theta(z)$ is holomorphic which are essentially the same conditions as is required for $P+V$ in the
Treil-Wick approach.  By (b) it follows that $(N_A + N_\Theta )\mathcal{R} \subseteq\mathcal{R}$; thus, $\tilde{\tilde{E}}=\tilde{E}|_{\mathcal{R}\otimes\mathbb{C}^n}$ defines an idempotent on 
$\mathcal{R} \otimes \mathbb{C}^n$.  

In particular, by (b) we have $M_\Phi N_A =0$ and hence $\tilde{\tilde{E}}=M_\Phi (N_\Theta +N_A)$ is a module idempotent on $\mathcal{R} \otimes \mathbb{C}^n$.  Moreover, clearly range 
$\tilde{\tilde{E}}$ is contained in range $M_\Phi =\tilde{E} \tilde{\tilde{E}}(\mathcal{R}\otimes \mathbb{C}^n)$.  Since $\tilde{\tilde{E}}M_\Phi=M_\Phi (N_A + N_\Theta) M_\Phi = M_\Phi$, we 
see that range $\tilde{\tilde{E}}=\tilde{E}_{\mathcal{R}\otimes \mathbb{C}^n}=\text{ range }M_\Phi$ which shows that (a) and (b) yield the module idempotent $\tilde{\tilde{E}}$ on 
$\mathcal{R}\otimes\mathbb{C}^n$ which establishes (1).

In other words, in both approaches, one has a module idempotent on $L^2 (\mu) \otimes \mathbb{C}^n$ with the appropriate range function and one seeks to modify it to another idempotent
keeping the range function the same but making it bounded and holomorphic.

Both H\"ormander and Treil-Wick complete the proof for the case $\mathbb{B}^m = \mathbb{D}$ or $m=1$, but we will not explore the methods used to take the nontrivial final steps since they are 
from harmonic analysis and involve ideas using the $\bar{\partial}$ problem, Koszul complex and Hankel forms.

\section{Toeplitz Corona Problem and the CLT Property}

Another way to relate the corona problem to operator theory is by weakening the conditions on the solution functions to lie in $\mathcal{R}$ as opposed to $\mathcal{M}(\mathcal{R})$.

A multiplier $\psi \in \mathcal{M}(\mathcal{R})$ for an RKHM $\mathcal{R}$ over $\Omega \subseteq \mathbb{C}^m$ defines the Toeplitz operator 
$T^{\mathcal{R}}_{\psi} \in \mathcal{L}(\mathcal{R})$ such that $T^{\mathcal{R}}_{\psi} f=\psi f$ for $f \in \mathcal{R}$.

Let $\{ \varphi_i \}^{n}_{i=1} \subseteq \mathcal{M}(\mathcal{R})$ for some RKHM $\mathcal{R}$ over $\Omega \subseteq \mathbb{C}^m$.  One says that there is a \emph{weak solution} to the corona 
problem if
\begin{itemize}
\item [(9)] There exists an $n$-tuple $\{f_i\}^{n}_{i=1} \subseteq \mathcal{R}$ such that $$\sum^{n}_{i=1} \varphi_i (\omega) f_i (\omega)=1 \text{ for } \omega \in \Omega$$ and a solution to 
the \emph{Toeplitz corona problem} if
\item [(10)] For every $f\in \mathcal{R}$ there exists $\{f_i\}^{n}_{i=1} \subseteq \mathcal{R}$ such that $$\sum^{n}_{i=1} \varphi_i (\omega) f_i (\omega)=f(\omega) \text{ for } 
\omega \in \Omega.$$
\end{itemize}

Since this statement involves $\mathcal{R}$, one sometimes speaks of the {$\mathcal{R}$\emph{-Toeplitz corona problem}.  

There are various relationships between statement (0)-(10).  We explore some of them emphasizing a somewhat weaker version of the corona problem introduced by several authors (cf. [2]) in the 
seventies related to classical Toeplitz operators.

\begin{pro}\label{6.1}
 For $\{ \varphi_i\}^{n}_{i=1} \subseteq \mathcal{M}(\mathcal{R})$ for an RKHM $\mathcal{R}$ over $\Omega \subseteq \mathbb{C}^m$, (1) implies (10) which in turn implies (9).  Moreover, (10) is
 equivalent to either
 \begin{itemize}
  \item [(11)] $N_\Phi$ is onto or
  \item [(12)] $$\sum^{n}_{i=1} ||T^{\mathcal{R}^{*}}_{\varphi_i} f||^2 \geq \epsilon^2 ||f||^2 \text{ for }f\in \mathcal{R} \text{ for some }\epsilon>0.$$
 \end{itemize}
\end{pro}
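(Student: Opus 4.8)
The plan is to establish the chain of implications $(1)\Rightarrow(10)\Rightarrow(9)$ first, and then prove the equivalences $(10)\Leftrightarrow(11)\Leftrightarrow(12)$ by purely Hilbert-space-operator arguments. For $(1)\Rightarrow(10)$: if $\sum_{i=1}^n\varphi_i\psi_i=1$ with $\psi_i\in\mathcal{M}(\mathcal{R})$, then given any $f\in\mathcal{R}$ the functions $f_i=\psi_i f$ lie in $\mathcal{R}$ and satisfy $\sum_i\varphi_i f_i=f$; in operator language, $N_\Psi M_f = \ldots$ — more directly, $N_\Psi$ restricted appropriately shows $N_\Phi$ is onto. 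The implication $(10)\Rightarrow(9)$ is immediate: take $f\equiv 1\in\mathcal{R}$ (recall we assumed $1\in\mathcal{R}$).

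For the equivalence $(10)\Leftrightarrow(11)$: statement $(10)$ says precisely that for every $f\in\mathcal{R}$ there is an element $\sum_i f_i\otimes e_i\in\mathcal{R}\otimes\mathbb{C}^n$ with $N_\Phi(\sum_i f_i\otimes e_i)=\sum_i\varphi_i f_i=f$, which is the definition of $N_\Phi$ being surjective. Here one must check that the pointwise identity $\sum_i\varphi_i(\omega)f_i(\omega)=f(\omega)$ on $\Omega$ is the same as the Hilbert-space identity $N_\Phi(\sum f_i\otimes e_i)=f$; this follows because elements of $\mathcal{R}$ are holomorphic functions on $\Omega$ and two elements of $\mathcal{R}$ agreeing pointwise on $\Omega$ are equal.

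For $(11)\Leftrightarrow(12)$: this is the standard duality between surjectivity of a bounded operator and bounded-below-ness of its adjoint. One computes the adjoint: $N_\Phi^*:\mathcal{R}\to\mathcal{R}\otimes\mathbb{C}^n$ satisfies $N_\Phi^* g=\sum_{i=1}^n (T^{\mathcal{R}}_{\varphi_i})^* g\otimes e_i$, since for $g\in\mathcal{R}$ and $\sum f_i\otimes e_i\in\mathcal{R}\otimes\mathbb{C}^n$ we have $\langle N_\Phi(\sum f_i\otimes e_i),g\rangle=\sum_i\langle\varphi_i f_i,g\rangle=\sum_i\langle f_i,(T^{\mathcal{R}}_{\varphi_i})^*g\rangle$. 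Hence $\|N_\Phi^* g\|^2=\sum_{i=1}^n\|(T^{\mathcal{R}}_{\varphi_i})^* g\|^2$, and the classical fact that a bounded operator between Hilbert spaces is onto iff its adjoint is bounded below gives $(11)\Leftrightarrow(12)$ directly, with the same $\epsilon$. I would cite the elementary lemma (e.g.\ via the open mapping theorem in one direction, and range-closed plus injectivity of the adjoint on its domain in the other) rather than reprove it.

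The only genuinely delicate point — and the step I expect to require the most care — is the passage between the \emph{pointwise} formulation on $\Omega$ used in statements $(9)$ and $(10)$ and the \emph{operator-theoretic} formulation in terms of $N_\Phi$. One needs that $N_\Phi(\sum f_i\otimes e_i)$, computed as an element of $\mathcal{R}$, is the function $\omega\mapsto\sum_i\varphi_i(\omega)f_i(\omega)$; this is built into the definition of $N_\Phi$ together with the RKHS property (evaluation is bounded, so convergence in $\mathcal{R}$ implies pointwise convergence on $\Omega$), but it is worth stating explicitly. Everything else is routine functional analysis.
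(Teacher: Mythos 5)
Your proposal is correct and follows essentially the same route as the paper: the paper's (terser) proof likewise gets $(10)\Leftrightarrow(11)$ directly from the definition of $N_\Phi$, and $(11)\Leftrightarrow(12)$ from the adjoint formula $N_\Phi^* f=\sum_{i=1}^n T^{\mathcal{R}*}_{\varphi_i}f\otimes e_i$ together with the standard fact that surjectivity is equivalent to the adjoint being bounded below, while $(1)\Rightarrow(10)$ via $f_i=\psi_i f$ and $(10)\Rightarrow(9)$ via $f\equiv 1\in\mathcal{R}$ are exactly the routine steps the paper leaves implicit. The only cosmetic blemish is the unfinished clause ``$N_\Psi M_f=\ldots$,'' which you should delete in favor of the direct argument you already give.
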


\begin{proof}
 The equivalence of (10) and $N_\Phi$ being onto follows from the definition of $N_\Phi$.  Similarly, since 
 $$N^{*}_{\Phi} f=\sum^{n}_{i=1} T^{\mathcal{R}^*}_{\varphi_i} f \otimes e_i \text{ for } f\in \mathcal{R},$$
 $N_\Phi$ being onto is equivalent to $N^{*}_{\Phi}$ being bounded below which is statement (12).
\end{proof}

Note that the $\epsilon$ in both (0) and (12) are the same.

The question of whether (11) or (12) implies (1) is what is usually referred to as the \emph{Toeplitz corona problem} for the algebra $\mathcal{M}(\mathcal{R})$ for some RKHM $\mathcal{R}$ for 
a domain $\Omega \subseteq \mathbb{C}^m$.  Schubert in [12] showed that (12) implies (1) for $\mathcal{R}=H^2 (\mathbb{D})$ and hence (9)-(12) are all equivalent in this case.  Thus one can 
``divide'' the corona problem for $\mathcal{M}(\mathcal{R})$ into two parts: For $\{\varphi_i \}^{n}_{i=1} \subseteq \mathcal{M}(\mathcal{R})$ show that (11) holds for some
RKHM $\mathcal{R}$ with $\{ \varphi_i \}^{n}_{i=1} \subseteq \mathcal{M}(\mathcal{R})$ and second, show that (11) for $\mathcal{R}$ implies (1).

Note that although the strategy doesn't involve $\mathcal{R}$ at the conclusion, only $\mathcal{M}(\mathcal{R})$ if (1) holds, then (11) must hold for any RKHM $\mathcal{R}^{'}$ with 
$\mathcal{M}(\mathcal{R}^{'})=\mathcal{M}(\mathcal{R})$ with the same $\epsilon$.

For the special class of RKHM, which satisfy the commutant lifting theorem or have the (CLT) property, one has (11) implies (1).  Note that since $H^2 (\mathbb{D})$ satisfies the CLT, this 
shows that (11) implies (1) in this case.

\begin{defn}\label{6.2}
 \emph{A RKHM $\mathcal{R}$ is said to have the CLT property if for submodules $\mathcal{S}_1$ and $\mathcal{S}_2$ of $\mathcal{R} \otimes \mathbb{C}^m$ and $\mathcal{R}\otimes \mathbb{C}^n$,
 respectively, and a module map $X: \mathcal{R}\otimes \mathbb{C}^m / \mathcal{S}_1 \to \mathcal{R}\otimes \mathbb{C}^n / \mathcal{S}^2$, there must exist a module map 
 $\bar{X}: \mathcal{R} \otimes \mathbb{C}^m \to \mathcal{R} \otimes \mathbb{C}^n$ such that $X\pi_{S_1}=\pi_{S_2} \bar{X}$ and $||\bar{X}||=||X||$.}
\end{defn}

\begin{thm}\label{6.3}
 Let $\mathcal{R}$ be an RKHM over $\Omega \subseteq \mathbb{C}^m$ having the CLT property and $\{ \varphi_i\}^{n}_{i=1} \subseteq \mathcal{M}(\mathcal{R})$ satisfying (11).  Then (1) holds.
\end{thm}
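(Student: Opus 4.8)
The plan is to use the commutant lifting hypothesis to manufacture a bounded right module inverse for $N_\Phi$, and then read off the solution $n$-tuple $\{\psi_i\}$ from Lemma 3.4. No ``hard analysis'' should be needed here; the CLT property is meant to do all of the work.

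First I would record the data in operator-theoretic form. By (11), $N_\Phi : \mathcal{R}\otimes\mathbb{C}^n \to \mathcal{R}$ is a surjective module map, so its kernel $\mathcal{S} = \ker N_\Phi$ is a submodule of $\mathcal{R}\otimes\mathbb{C}^n$, and the induced map $\widetilde{N}_\Phi : (\mathcal{R}\otimes\mathbb{C}^n)/\mathcal{S} \to \mathcal{R}$ is a bijective module map, hence a module isomorphism by the open mapping theorem. Thus $X := \widetilde{N}_\Phi^{-1} : \mathcal{R} \to (\mathcal{R}\otimes\mathbb{C}^n)/\mathcal{S}$ is a bounded module map. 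Using the equivalent form (12), with the same $\epsilon$ (Proposition 6.1), I would also note the norm bound $\|X\| \le 1/\epsilon$: the minimal-norm preimage of $f$ under $N_\Phi$ is $g_0 = N_\Phi^{*}(N_\Phi N_\Phi^{*})^{-1}f$, and $\|g_0\|^2 = \langle (N_\Phi N_\Phi^{*})^{-1}f, f\rangle \le \epsilon^{-2}\|f\|^2$.

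Next I would put this in the shape demanded by the Definition of the CLT property. Regarding $\mathcal{R}$ as the quotient module $\mathcal{R}\otimes\mathbb{C}^1/\{0\}$, the map $X$ is a module map from $\mathcal{R}\otimes\mathbb{C}^1/\{0\}$ to $\mathcal{R}\otimes\mathbb{C}^n/\mathcal{S}$, so the CLT property yields a module map $\overline{X} : \mathcal{R} \to \mathcal{R}\otimes\mathbb{C}^n$ with $\pi_{\mathcal{S}}\overline{X} = X$ and $\|\overline{X}\| = \|X\| \le 1/\epsilon$. Since $N_\Phi = \widetilde{N}_\Phi \circ \pi_{\mathcal{S}}$, this gives $N_\Phi\overline{X} = \widetilde{N}_\Phi X = I_\mathcal{R}$, so $\overline{X}$ is a right module inverse of $N_\Phi$. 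Finally, by Lemma 3.4 (with $k=1$, $l=n$) there exist $\{\psi_i\}_{i=1}^n \subseteq \mathcal{M}(\mathcal{R})$ with $\overline{X} = M_\Psi$, and unwinding $N_\Phi M_\Psi = I_\mathcal{R}$ gives $\sum_{i=1}^n \varphi_i\psi_i = 1$, which is exactly (1); the bound $\|M_\Psi\|\le 1/\epsilon$ then yields the customary control on the $\psi_i$.

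The hard part --- really the only place the hypothesis is invoked --- is the passage from the surjective Hilbert-module operator $N_\Phi$ to a map between quotient modules of exactly the form appearing in the CLT definition; the trick is to write $\mathcal{R}$ as $\mathcal{R}\otimes\mathbb{C}^1/\{0\}$ and to quotient the source $\mathcal{R}\otimes\mathbb{C}^n$ by $\ker N_\Phi$ so that $X = \widetilde{N}_\Phi^{-1}$ is the module map to be lifted. Once that is set up the remainder is bookkeeping: verifying that $\ker N_\Phi$ is a submodule, that $\widetilde{N}_\Phi$ is invertible (surjectivity plus the open mapping theorem), that the lift $\overline{X}$ genuinely lands in $\mathcal{R}\otimes\mathbb{C}^n$ rather than in a quotient, and that Lemma 3.4 applies. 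If one wants only the qualitative statement, the norm clause of the CLT property is not even needed --- just the existence of a module lift.
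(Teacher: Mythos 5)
Your proof is correct and follows essentially the same route as the paper: set $\mathcal{S}=\ker N_\Phi$, invert the induced map on $(\mathcal{R}\otimes\mathbb{C}^n)/\mathcal{S}$, lift that inverse via the CLT property to a module map $\overline{X}:\mathcal{R}\to\mathcal{R}\otimes\mathbb{C}^n$ with $N_\Phi\overline{X}=I_{\mathcal{R}}$, and identify $\overline{X}=M_\Psi$ by Lemma 3.4. The explicit norm bound $\|X\|\le 1/\epsilon$ and the remark that the norm-preserving clause of CLT is not needed for the qualitative statement are harmless additions not present in, but consistent with, the paper's argument.
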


\begin{proof}
 Set $\mathcal{S}=\ker N_{\Phi} \subseteq\mathcal{R} \otimes \mathbb{C}^n$ and then consider $Y: (\mathcal{R} \otimes \mathbb{C}^n /\mathcal{S}) \to \mathcal{R}$ such that $N_\Phi = Y \pi_S$.
 Then (11) implies that $X=Y^{-1}$ is a bounded module map and the CLT property yields $\bar{X}: \mathcal{R}\to \mathcal{R} \otimes \mathbb{C}^n$ so that $X=\pi_S \bar{X}$.  Since 
 $\pi_S=XN_\Phi$, one has $X=\pi_S \bar{X}=XN_\Phi \bar{X}$, which implies $N_\Phi \bar{X}=I_\mathcal{R}$ since $X$ is invertible.  By Lemma 3.4, $\bar{X}=\mathcal{M}_\Psi$ for some 
 $\{ \psi_i \}^{n}_{i=1} \subseteq \mathcal{M}(\mathcal{R})$ or $$\sum^{n}_{i=1} \varphi_i (\omega) \psi_i (\omega)=1 \text{ for } \omega \in \Omega ,$$ which completes the proof.
\end{proof}

\section{Toeplitz Corona Problem and Logmodular Algebras}

There is another method of showing that (11) implies (1) for some RKHM $\mathcal{R}$ which was developed by a number of authors (c.f. [2]) culminating in [11] by Trent-Wick, and then extended 
modestly in [4].  The proof given in the latter paper can be used without change to prove the following result.

\begin{thm}\label{7.1}
 Let $\mathcal{R}$ be a subnormal RKHM over $\Omega \subseteq \mathbb{C}^m$ for the probability measure $\mu$ on $\bar{\Omega}$ such that 
 \begin{itemize}
  \item [(i)] $\{k_\omega\}_{\omega \in \Omega} \subseteq \mathcal{M}(\mathcal{R}),$
  \item [(ii)] range $T^{\mathcal{R}}_{k_\omega}$ is closed for $\omega \in \Omega$; and
  \item [(iii)] for $\{ \omega_j \}^{N}_{j=1} \subseteq \Omega\text{ and } \{\lambda_j \}^{N}_{j=1} \subset \mathbb{C}^n,$ there exists $g \in \mathcal{R}$ such that 
  $$|g(z)|^2 = \sum^{N}_{j=1} |\lambda_j |^2 |k_{\omega_j} (\omega)|^2, \quad \mu \text{ a.e.}$$
 \end{itemize}
Then for $\{\varphi_i \}^{n}_{i=1} \subseteq \mathcal{M}(\mathcal{R})$ satisfying (0), (1) holds if one has
\begin{itemize}
 \item[(13)] an affirmative answer to the Toeplitz corona problem (10) for all submodules of $\mathcal{R}$ with the same $\epsilon$,
\end{itemize}
implies an affirmative answer for the corona problem (1).
\end{thm}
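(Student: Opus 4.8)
The plan is to deduce the corona statement (1) from a uniform family of finite interpolation facts, each obtained by applying the hypothesis (13) to a single cyclic submodule, and then to pass to a limit. First I would note that (13) applied to $\mathcal{S}=\mathcal{R}$ already gives, by Proposition 6.1, that $N_\Phi$ is onto; what must be improved is the passage from an $\mathcal{R}$-valued solution to one with entries in $\mathcal{M}(\mathcal{R})$. The reduction I would use is: it suffices to produce, for every finite set $F=\{\omega_1,\dots,\omega_N\}\subseteq\Omega$, an $n$-tuple $\{\psi^{F}_i\}_{i=1}^n\subseteq\mathcal{M}(\mathcal{R})$ with $\sum_{i=1}^n\varphi_i(\omega)\psi^{F}_i(\omega)=1$ for all $\omega\in F$ and with $\sum_{i=1}^n\|M_{\psi^{F}_i}\|^2\le 1/\epsilon^2$, the bound being independent of $F$. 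Indeed, the ball $\{M_\psi:\psi\in\mathcal{M}(\mathcal{R}),\ \|M_\psi\|\le 1/\epsilon\}$ is compact in the weak operator topology on $\mathcal{L}(\mathcal{R})$ and closed there (a weak-operator limit of such multiplication operators is again one of them, its symbol being a locally uniform limit of the symbols), while on it the evaluation $M_\psi\mapsto\psi(\omega)=\langle M_\psi 1,k_\omega\rangle$ is continuous since $1\in\mathcal{R}$. Directing the finite sets $F$ by inclusion and taking a cluster point of the net $(M_{\psi^{F}_1},\dots,M_{\psi^{F}_n})$ then yields $\{\psi_i\}_{i=1}^n\subseteq\mathcal{M}(\mathcal{R})$ with $\sum_i\varphi_i\psi_i=1$ on all of $\Omega$, which is (1).

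For the finite-set construction, fix $F=\{\omega_1,\dots,\omega_N\}$ and test vectors $\lambda_1,\dots,\lambda_N\in\mathbb{C}^n$, and invoke (iii) to obtain $g\in\mathcal{R}$ with $|g|^2=\sum_{j=1}^N|\lambda_j|^2|k_{\omega_j}|^2$ $\mu$-a.e.; by subnormality, $\|g\|_{\mathcal{R}}^2=\int_{\bar\Omega}|g|^2\,d\mu=\sum_{j=1}^N|\lambda_j|^2K(\omega_j,\omega_j)$. Let $\mathcal{S}=[g]$ be the cyclic submodule of $\mathcal{R}$ generated by $g$, i.e., the closure of $\mathbb{C}[z_1,\dots,z_m]\,g$. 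Since $g\in\mathcal{S}$, the affirmative answer to the Toeplitz corona problem (10) for $\mathcal{S}$ \emph{with the same} $\epsilon$ provides $h_1,\dots,h_n\in\mathcal{S}$ with $\sum_{i=1}^n\varphi_i h_i=g$ on $\Omega$ and $\sum_{i=1}^n\|h_i\|_{\mathcal{R}}^2\le\epsilon^{-2}\|g\|_{\mathcal{R}}^2$. Setting $\psi_i=h_i/g$, one has $\sum_i\varphi_i\psi_i=1$ wherever $g\neq0$, hence on all of the connected domain $\Omega$.

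The remaining, and central, point is to check that the $\psi_i$ are multipliers with $\sum_i\|M_{\psi_i}\|^2\le\epsilon^{-2}$ — or at least that this bound holds for the action of the $\psi_i$ on the span of $\{k_{\omega_1},\dots,k_{\omega_N}\}$, which is all that interpolation at $F$ requires. This is exactly where hypotheses (i), (ii), and the precise shape of $|g|^2$ in (iii) enter. Because $h_i\in[g]$, subnormality lets us write $h_i=g\,q_i$ with $q_i$ a limit of polynomials in $L^2(|g|^2\,\mu)$; the identity $|g|^2\,d\mu=\sum_j|\lambda_j|^2|k_{\omega_j}|^2\,d\mu$ then converts $L^2(|g|^2\mu)$-control into control against the kernel vectors $k_{\omega_j}$, while (i) and the closed-range condition (ii) ensure that the formal quotients $q_i=h_i/g$ genuinely map $\mathcal{R}$ into $\mathcal{R}$ in the required way. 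The inequality $\sum_i\|h_i\|^2\le\epsilon^{-2}\|g\|^2$ then transfers to $\sum_i\|M_{\psi_i}\|^2\le\epsilon^{-2}$ on that finite-dimensional co-invariant subspace, giving the uniform family $\{\psi^{F}_i\}$ demanded in the first paragraph, after which the weak-operator limit there completes the proof. This transfer is the Hilbert-module analog of the classical fact that in a logmodular uniform algebra every nonnegative boundary function is, to within an arbitrarily small error, the square modulus of an invertible element — which is why (iii) is the appropriate substitute here for ``$\mathcal{M}(\mathcal{R})$ logmodular.'' I expect this step to be the main obstacle: lacking the commutant lifting property used in Section 6 to produce the lift for free, one must extract a bounded multiplier tuple with norm governed by $1/\epsilon$ directly from the Hilbert-module solution $(h_i)$ on the cyclic submodule $[g]$, and it is precisely conditions (i)--(iii) on the subnormal model $\mathcal{R}\subseteq L^2(\mu)$ that make this extraction possible.
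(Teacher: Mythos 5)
Your skeleton is the right one and matches the scheme the paper invokes (Trent--Wick [11] as adapted in [4]): for a finite set of points use (iii) to build $g$ with $|g|^2=\sum_j|\lambda_j|^2|k_{\omega_j}|^2$, apply the uniform Toeplitz corona hypothesis (13) to the associated cyclic submodule to solve $\sum_i\varphi_ih_i=g$ with $\sum_i\|h_i\|^2\le\epsilon^{-2}\|g\|^2$, divide by $g$, and pass to a limit over finite sets. But the step you yourself flag as ``the main obstacle'' is precisely the heart of the proof, and it is missing. From a \emph{single} choice of $\lambda_1,\dots,\lambda_N$ the estimate $\sum_i\|h_i\|^2\le\epsilon^{-2}\|g\|^2$ only yields an averaged bound: writing $h_i=g\psi_i$ and using (i) so that $\psi_ik_{\omega_j}\in\mathcal{R}$, one gets $\int|\psi_i|^2|k_{\omega_j}|^2\,d\mu\ge|\psi_i(\omega_j)|^2K(\omega_j,\omega_j)$, hence $\sum_j t_j\sum_i|\psi_i(\omega_j)|^2\le\epsilon^{-2}$ with weights $t_j$ proportional to $|\lambda_j|^2K(\omega_j,\omega_j)$ --- and the solution $(h_i)$ depends on the chosen weights. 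To obtain one tuple with $\sum_i|\psi_i(\omega_j)|^2\le\epsilon^{-2}$ \emph{simultaneously} for all $j$ one must let the weights range over the simplex and invoke von Neumann's min--max theorem, exploiting convexity of the solution set; this is exactly the ingredient the paper singles out (``show using the von Neumann min-max theorem that one can bound the values at all finite sets of points''), and nothing in your ``transfer'' paragraph substitutes for it.

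Two secondary problems compound this. First, the bound you aim for, $\sum_i\|M_{\psi_i}\|^2\le\epsilon^{-2}$ in \emph{multiplier} norm, is both unjustified and stronger than what the argument produces: the method gives pointwise bounds $\sum_i|\psi_i(\omega)|^2\le\epsilon^{-2}$ at the points of the finite set, and only after the limiting step a global $H^\infty$ bound. Second, your compactness reduction in the first paragraph therefore has no input: it requires each finite-set solution to already be a multiplier tuple with uniformly bounded multiplier norm, whereas your construction only yields $\psi_i=h_i/g$ with $h_i$ in the cyclic submodule $[g]$ --- a quotient that need not be a multiplier at all, and whose holomorphy/boundedness off the zero set of $g$ is where condition (ii) (closed range of $T^{\mathcal{R}}_{k_\omega}$) is actually needed. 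The paper's route avoids this by retaining only the \emph{values} of the solutions at the finite set, with the uniform pointwise bound coming from the min--max step, and then concluding with a normal-families argument; as written, your proposal records the correct scaffolding but leaves the theorem's essential content unproved.
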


The idea here is to consider all the solutions in $\mathcal{R}$ and show using the von Neumann min-max theorem that one can bound the values at all finite sets of points in $\Omega$.  A normal
family argument completes the proof.  However, the Toeplitz corona solution for the family of submodules must have the same $\epsilon$ as is expressed in (13).

In the proof one considers submodules $S$ of $\mathcal{R}$ that are the range of Toeplitz operators defined by multipliers with closed range which is what conditions (i) and (ii) 
provide.  Condition (iii) allows one to replace a kind of ``convex combination'' of such submodules by one submodule.

As indicated above, the proof of the theorem and the earlier versions of it along these lines rely on an affirmative answer to the Toeplitz corona problem not just for a subnormal RKHM 
$\mathcal{R}$ on $\Omega \subseteq \mathbb{C}^m$ for a probability measure $\mu$ supported on $\bar{\Omega}$ but for a whole family of RKHM.  The family can be restricted to cyclic submodules 
where the generator can be taken to be invertible in $L^\infty (\mu)$.  In some cases it suffices to assume (11) for just $\mathcal{R}$ which is what happens in the classical case of 
$\mathcal{R}=H^2 (\mathbb{D})$, since by Beurling's theorem all (cyclic) submodules of $H^2 (\mathbb{D})$ are isometrically isomorphic to $H^2 (\mathbb{D})$ itself.  That doesn't happen very 
often (c.f. [3]) but it does if one has a subalgebra of $\mathcal{M}(\mathcal{R})$ which is a Dirichlet algebra.  But one can get by with less.

\begin{defn}\label{7.2}
 A subnormal RKHM $\mathcal{R}$ over $\Omega \subseteq \mathbb{C}^m$ for the probality measure $\mu$ supported on $\bar{\Omega}$ is said to be \emph{invertibly approximating in modulus} if for 
 every nonnegative function $\eta \in L^\infty (\mu)^{-1}$ and $\delta>0$ there exists $\psi \in \mathcal{M}(\mathcal{R})$ such that $|\eta(z)-|\psi(z)||<\delta \quad \mu \text{ a.e.}$
\end{defn}

A logmodular algebra is invertibly approximating in modulus but the converse is unclear.  In any case, the former notion has the following implication for the cyclic submodules of the RKHM 
$\mathcal{S}$ defined by a Toeplitz operator with closed range.

\begin{defn}\label{7.3}
 Two Hilbert modules $\mathcal{M}_1$ and $\mathcal{M}_2$ are said to be \emph{almost isometric} if for every $\delta>0$ there exists a module isomorphism $X:\mathcal{M}_1 \to \mathcal{M}_2$ such
 that max $(||X||,||X^{-1}||)<1+\delta$.
\end{defn}

Our interest in these two notions lies in the following results.

\begin{pro}\label{7.4}
 Let $\mathcal{R}$ and $\mathcal{R}^{'}$ be almost isometric RKHM over $\Omega \subseteq \mathbb{C}^m$.  Then
 \begin{itemize}
  \item [(i)] $\mathcal{M}(\mathcal{R})=\mathcal{M}(\mathcal{R}^{'}).$
  \item [(ii)] Inequality (12) holds for $\mathcal{R}$ iff (12) holds for $\mathcal{R}^{'}$ with the same $\epsilon$.
 \end{itemize}
\end{pro}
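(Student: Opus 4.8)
The plan is to reduce both parts to a single structural observation: a module isomorphism between two reproducing kernel Hilbert modules over the same $\Omega$ is necessarily multiplication by a zero-free holomorphic function on $\Omega$. Granting this, part (i) is immediate bookkeeping, and part (ii) is a short quantitative argument in which the ``almost isometric'' defect is sent to zero.

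First I would prove (i). Fix any module isomorphism $X\colon\mathcal{R}\to\mathcal{R}'$; one exists by Definition 7.3, e.g.\ taking $\delta=1$. Arguing as in Lemma 3.4, from $XM^{\mathcal{R}}_{z_{j}}=M^{\mathcal{R}'}_{z_{j}}X$ one passes to adjoints to get $(M^{\mathcal{R}}_{z_{j}})^{*}X^{*}=X^{*}(M^{\mathcal{R}'}_{z_{j}})^{*}$, and applying this to the reproducing kernel $k^{\mathcal{R}'}_{\omega}$ (a joint eigenvector of the $(M^{\mathcal{R}'}_{z_{j}})^{*}$ at $\bar{\omega}_{j}$) shows that $X^{*}k^{\mathcal{R}'}_{\omega}$ is a joint eigenvector of the operators $(M^{\mathcal{R}}_{z_{j}})^{*}$ with eigenvalues $\bar{\omega}_{1},\dots,\bar{\omega}_{m}$. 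Since that joint eigenspace is $\mathbb{C}\,k^{\mathcal{R}}_{\omega}$, we obtain $X^{*}k^{\mathcal{R}'}_{\omega}=\overline{c(\omega)}\,k^{\mathcal{R}}_{\omega}$ for a scalar $c(\omega)$, whence $(Xf)(\omega)=\langle f,\,X^{*}k^{\mathcal{R}'}_{\omega}\rangle=c(\omega)f(\omega)$ for every $f\in\mathcal{R}$. Thus $X=M_{c}$ with $c$ holomorphic on $\Omega$, and the same argument for $X^{-1}$ gives $X^{-1}=M_{d}$; since $M_{cd}=XX^{-1}=I_{\mathcal{R}'}$ and $1\in\mathcal{R}'$ we have $cd\equiv1$, so $c$ is zero-free on $\Omega$. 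Now if $\psi\in\mathcal{M}(\mathcal{R})$ then $XM^{\mathcal{R}}_{\psi}X^{-1}$ is a bounded operator on $\mathcal{R}'$, and being a composition of multiplication operators it is multiplication by $cd\psi=\psi$; hence $\psi\in\mathcal{M}(\mathcal{R}')$. By the symmetry of the hypothesis, $\mathcal{M}(\mathcal{R})=\mathcal{M}(\mathcal{R}')$.

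Next I would prove (ii). Assume (12) holds for $\mathcal{R}$ with constant $\epsilon$ (so each $\varphi_{i}$ lies in $\mathcal{M}(\mathcal{R})=\mathcal{M}(\mathcal{R}')$ by (i)), and fix $\delta>0$. Choose by Definition 7.3 a module isomorphism $X=M_{c}\colon\mathcal{R}\to\mathcal{R}'$ with $\max(\|X\|,\|X^{-1}\|)<1+\delta$. Because multiplication operators commute, $T^{\mathcal{R}'}_{\varphi_{i}}=XT^{\mathcal{R}}_{\varphi_{i}}X^{-1}$, hence $(T^{\mathcal{R}'}_{\varphi_{i}})^{*}=(X^{*})^{-1}(T^{\mathcal{R}}_{\varphi_{i}})^{*}X^{*}$. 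For $g\in\mathcal{R}'$, combining the elementary estimate $\|(X^{*})^{-1}y\|\geq\|y\|/\|X\|$, inequality (12) for $\mathcal{R}$ applied to $X^{*}g\in\mathcal{R}$, and $\|X^{*}g\|\geq\|g\|/\|X^{-1}\|$, I get
\begin{align*}
\sum_{i=1}^{n}\|(T^{\mathcal{R}'}_{\varphi_{i}})^{*}g\|^{2}
&\geq\frac{1}{(1+\delta)^{2}}\sum_{i=1}^{n}\|(T^{\mathcal{R}}_{\varphi_{i}})^{*}(X^{*}g)\|^{2}
\geq\frac{\epsilon^{2}}{(1+\delta)^{2}}\,\|X^{*}g\|^{2}\\
&\geq\frac{\epsilon^{2}}{(1+\delta)^{4}}\,\|g\|^{2}.
\end{align*}
The left-hand side does not involve $\delta$, so letting $\delta\to0^{+}$ yields $\sum_{i}\|(T^{\mathcal{R}'}_{\varphi_{i}})^{*}g\|^{2}\geq\epsilon^{2}\|g\|^{2}$ for all $g\in\mathcal{R}'$, i.e.\ (12) holds for $\mathcal{R}'$ with the same $\epsilon$; the reverse implication follows by replacing $X$ with $X^{-1}$.

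I expect the step carrying the weight to be the one used to launch (i): that a module isomorphism between reproducing kernel Hilbert modules must be a multiplication operator. This rests on the joint eigenspaces of the adjoint coordinate shifts being one-dimensional and spanned by the kernel functions — the same fact that underlies Lemma 3.4 — so it is not really new, but it is the only non-formal ingredient. After that, (i) is routine, and the only delicate point in (ii) is the $\delta\to0$ passage, which is precisely what preserves the \emph{exact} value of $\epsilon$ rather than a merely comparable constant, and hence is what makes Proposition 7.4(ii) usable in hypothesis (13) of Theorem 7.1, where the uniformity of $\epsilon$ across a family of modules is what matters.
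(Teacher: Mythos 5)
Your part (ii) is, line for line, the paper's own proof: conjugate the adjoint Toeplitz operators by the module isomorphism, obtain the lower bound $\epsilon^{2}/\bigl(\|X\|^{2}\|X^{-1}\|^{2}\bigr)\geq \epsilon^{2}/(1+\delta)^{4}$, and let $\delta\to 0^{+}$ to recover the same $\epsilon$; so on that half there is nothing to compare. Where you genuinely differ is part (i): the paper's proof never addresses it, and in fact simply asserts the intertwining relation $T^{\mathcal{R}}_{\varphi_i}X=XT^{\mathcal{R}'}_{\varphi_i}$, which already presupposes both that $\varphi_i$ multiplies $\mathcal{R}'$ and that the module map intertwines multiplication by multipliers (not just by the coordinates). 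Your kernel-eigenvector argument, showing that a module isomorphism between RKHMs over the same $\Omega$ is multiplication by a zero-free holomorphic function and hence conjugation fixes the multiplier algebra, supplies exactly the missing justification, and it is the natural one. The only caveat is the step you yourself flag: the identification of the joint eigenspace of the operators $(M^{\mathcal{R}}_{z_j})^{*}$ at $\bar{\omega}$ with $\mathbb{C}\,k^{\mathcal{R}}_{\omega}$ requires that the polynomials (or the multipliers) be dense in $\mathcal{R}$; the paper explicitly notes in Section 3 that this density can fail for a general RKHM, although it is tacitly used there as well (it is what makes the ``standard linear algebra calculations'' of Lemma 3.4 work, and the paper's own proof of this proposition rests on the same unstated hypothesis). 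So your argument is correct at the paper's level of rigor, but it would be cleaner to state the density assumption (or restrict to RKHMs in which the kernel functions span the joint eigenspaces) when you invoke that step.
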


\begin{proof}
 Assume (12) holds for $\mathcal{R}$ for some $\delta>0$, and let $X:\mathcal{R}^{'} \to \mathcal{R}$ be a module map such that max $(||X||,||X^{-1}||)\leq 1+\delta$.  Then for 
 $\{ \varphi_i \}^{n}_{i=1} \subseteq \mathcal{M}(\mathcal{R})$ we have $T^{\mathcal{R}}_{\varphi_i}X=XT^{\mathcal{R}^{'}}_{\varphi_i}$ for $i=1,...,n$.  If (12) holds for $\mathcal{R}$ and 
 some $\epsilon >0$, then
 \begin{align*}
 &\sum^{n}_{i=1}||T^{\mathcal{R}^{'*}}_{\varphi_i} f||^2 = \sum^{n}_{i=1} ||X^{*} T^{\mathcal{R}^{'*}}_{\varphi_i} X^{*-1}f||^2 
 \geq \frac{1}{||X^{-1}||^2} \sum^{n}_{i=1}||T^{\mathcal{R}^{'*}}_{\varphi_i} X^{*-1} f||^2 \geq \\
 &\frac{\epsilon^2}{||X^{-1}||^2} ||X^{*-1}f||^2 \geq \frac{\epsilon^2}{||X^{-1}||^2 ||X||^2} ||f||^2 \geq \frac{\epsilon^2}{(1+\delta)^4} ||f||^2 .
 \end{align*}

 Hence (12) holds for $\mathcal{R}^{'}$ for $\frac{\epsilon^2}{(1+\delta)^4}$ but since $\delta>0$ is arbitrary, we have (12) for $\mathcal{R}^{'}$ with the same $\epsilon >0$.
\end{proof}

\begin{pro}\label{7.5}
 Let $\mathcal{R}$ be the subnormal RKHM $L^{2}_{a} (\mu)$ over $\Omega \subseteq \mathbb{C}^m$ and the probability measure $\mu$ supported on $\bar{\Omega}$ such that  
 $\mathcal{M}(\mathcal{R})=H^\infty (\Omega)$.  If $H^\infty (\Omega)$ is invertibly approximating in modulus, then for $\psi \in H^\infty (\Omega)$ such that $T^{\mathcal{R}}_{\psi}$ has 
 closed range $\mathcal{R}^{'}$, $T^{\mathcal{R}^{'}}_{\psi}$ has closed range and $T^{\mathcal{R}}_{\psi}$ and $T^{\mathcal{R}^{'}}_{\psi}$ are almost isometric.
\end{pro}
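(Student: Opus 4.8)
The plan is to produce, for each $\delta>0$, an \emph{invertible} multiplier $\chi_\delta\in\mathcal{M}(\mathcal{R})=H^\infty(\Omega)$ whose modulus is within $O(\delta)$ of $1$ $\mu$-a.e.\ and which satisfies $\chi_\delta\mathcal{R}=\psi\mathcal{R}=\mathcal{R}'$. Then $M_{\chi_\delta}\colon\mathcal{R}\to\mathcal{R}'$ is a module isomorphism whose norm and inverse norm tend to $1$ as $\delta\to0$, so $\mathcal{R}$ and $\mathcal{R}'$ are almost isometric RKHM; being a multiplication, $M_{\chi_\delta}$ also satisfies $M_{\chi_\delta}T^\mathcal{R}_\psi=T^{\mathcal{R}'}_\psi M_{\chi_\delta}$, so $T^\mathcal{R}_\psi$ and $T^{\mathcal{R}'}_\psi$ are intertwined by these almost-isometric isomorphisms. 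Via Proposition \ref{7.4} this simultaneously transports inequality (12) between $\mathcal{R}$ and $\mathcal{R}'$ with the same $\epsilon$, which is the use we want to make of it.

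First I would dispose of the easy assertion. We may assume $\psi\not\equiv0$; then $T^\mathcal{R}_\psi$ is injective because $\Omega$ is connected, and closed range forces it to be bounded below, say $\|\psi f\|\ge c\|f\|$ for $f\in\mathcal{R}$ in the $L^2(\mu)$ norm. Since $\mathcal{R}'=\psi\mathcal{R}$ is a submodule of $\mathcal{R}$ and $\psi g\in\psi\mathcal{R}'\subseteq\psi\mathcal{R}=\mathcal{R}'$ for $g\in\mathcal{R}'$, the operator $T^{\mathcal{R}'}_\psi$ is exactly the restriction of $T^\mathcal{R}_\psi$ to $\mathcal{R}'$; hence it is bounded below by the same $c$, and in particular has closed range.

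The substantive step is the construction of $\chi_\delta$. From the closed range of $T^\mathcal{R}_\psi$ I would first extract that $|\psi|$ is bounded below $\mu$-a.e., i.e.\ that $\eta:=1/|\psi|$ is a nonnegative element of $L^\infty(\mu)^{-1}$; this is where subnormality and a logmodular / inner–outer type factorization of $H^\infty(\Omega)$ relative to $\mu$ enter, the $\mu$-a.e.\ modulus of $\psi$ being that of its "outer factor", and closed range of $T^\mathcal{R}_\psi$ forcing that factor to be an invertible multiplier. Then the invertibly-approximating-in-modulus hypothesis supplies, for each $\delta>0$, a $\varphi_\delta\in\mathcal{M}(\mathcal{R})$ with $\bigl||\varphi_\delta(z)|-1/|\psi(z)|\bigr|<\delta$ $\mu$-a.e. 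After discarding any inner factor of $\varphi_\delta$ (which does not change $|\varphi_\delta|$ $\mu$-a.e.) we may take $\varphi_\delta$ zero-free on $\Omega$; since its modulus is then bounded below $\mu$-a.e., namely by $1/\|\psi\|_\infty-\delta>0$ for $\delta$ small, it is invertible in $H^\infty(\Omega)=\mathcal{M}(\mathcal{R})$. Setting $\chi_\delta:=\psi\varphi_\delta\in\mathcal{M}(\mathcal{R})$ we obtain $\bigl||\chi_\delta|-1\bigr|<\|\psi\|_\infty\,\delta$ $\mu$-a.e.\ and $\chi_\delta\mathcal{R}=\psi\varphi_\delta\mathcal{R}=\psi\mathcal{R}=\mathcal{R}'$, the last equality because $\varphi_\delta$ is an invertible multiplier.

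To finish, for $f\in\mathcal{R}$ one has $(1-\|\psi\|_\infty\delta)^2\|f\|^2\le\|\chi_\delta f\|^2=\int|\chi_\delta|^2|f|^2\,d\mu\le(1+\|\psi\|_\infty\delta)^2\|f\|^2$, so $M_{\chi_\delta}\colon\mathcal{R}\to\mathcal{R}'$ is a module isomorphism with $\max(\|M_{\chi_\delta}\|,\|M_{\chi_\delta}^{-1}\|)\to1$ as $\delta\to0$; hence $\mathcal{R}$ and $\mathcal{R}'$ are almost isometric, and since $M_{\chi_\delta}T^\mathcal{R}_\psi=T^{\mathcal{R}'}_\psi M_{\chi_\delta}$ the operators $T^\mathcal{R}_\psi$ and $T^{\mathcal{R}'}_\psi$ are almost isometric as well, and Proposition \ref{7.4} applies. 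The hard part, which is really the content of the proposition, is the two appeals to inner–outer factorization relative to $\mu$: deducing $1/|\psi|\in L^\infty(\mu)$ from closed range, and upgrading the modulus-approximant $\varphi_\delta$ to a genuine invertible multiplier — the invertibly-approximating hypothesis by itself only controls $|\varphi_\delta|$ on the support of $\mu$, which lies on the boundary, so a priori $\varphi_\delta$ could vanish inside $\Omega$ and $\chi_\delta\mathcal{R}$ could be a proper submodule of $\mathcal{R}'$.
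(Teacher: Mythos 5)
Your overall route is essentially the paper's. The paper's proof also reduces everything to the invertibly-approximating hypothesis: it approximates $|\psi|$ itself by an \emph{invertible} $\theta\in\mathcal{M}(\mathcal{R})$ with $\bigl||\theta|-|\psi|\bigr|<\delta$ $\mu$-a.e., identifies the submodule $\psi\mathcal{R}$ with the closure of $H^\infty(\Omega)$ in $L^2(|\psi|^2\mu)$, and compares it with the closure in $L^2(|\theta|^2\mu)$ (which is all of $\mathcal{R}$ since $\theta$ is invertible), the norms of the comparison map and its inverse being controlled by $\sup\max\{|\theta|/|\psi|,|\psi|/|\theta|\}$. That comparison map is exactly multiplication by $\psi\theta^{-1}$, i.e.\ your $\chi_\delta$ with $\varphi_\delta$ playing the role of $\theta^{-1}$; your restriction argument for the closed range of $T^{\mathcal{R}'}_\psi$ and the two-sided bound $\bigl(1\pm\|\psi\|_\infty\delta\bigr)\|f\|$ are fine, and the claim that closed range of $T^{\mathcal{R}}_\psi$ forces $|\psi|$ to be bounded below $\mu$-a.e.\ is asserted without proof in the paper as well, so you are not worse off there.

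The one genuine soft spot is your patch to make $\varphi_\delta$ invertible. ``Discarding the inner factor'' presupposes an inner--outer factorization of $H^\infty(\Omega)$ relative to $\mu$ that is not available for general $\Omega$ and $\mu$, and even granting it, the criterion you then invoke --- zero-free on $\Omega$ together with modulus bounded below $\mu$-a.e.\ implies invertible in $H^\infty(\Omega)$ --- is false: a singular inner function on $\mathbb{D}$ is zero-free with boundary modulus $1$ a.e.\ yet not invertible in $H^\infty(\mathbb{D})$. The paper sidesteps this entirely because in its proof the approximant furnished by the invertibly-approximating-in-modulus property is taken to be invertible in $\mathcal{M}(\mathcal{R})$; that is the intended content of Definition \ref{7.2} (as the name and the logmodular example indicate, even though the displayed definition omits the word ``invertible''). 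If you read the hypothesis that way, your construction of $\chi_\delta=\psi\varphi_\delta$, the equality $\chi_\delta\mathcal{R}=\psi\mathcal{R}=\mathcal{R}'$, the intertwining $M_{\chi_\delta}T^{\mathcal{R}}_\psi=T^{\mathcal{R}'}_\psi M_{\chi_\delta}$, and the appeal to Proposition \ref{7.4} all go through, and your proof is then a faithful (slightly repackaged) version of the paper's.
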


\begin{proof}
 First, the map $X f=f$ from the submodule range $T^{\mathcal{R}}_{\psi}$ in $L^{2}_{a} (\mu)$ to $L^{2}_{a} (|\psi|^2)$. Because range $T^{\mathcal{R}}_{\psi}$ is closed, one has
 $|\psi|$ bounded below by $\eta>0$, $\mu$ a.e.  Since $\mathcal{M}(\mathcal{R})$ is invertibly approximating in modulus, for $\delta>0$, there exists an invertible $\theta$ in 
 $\mathcal{M}(\mathcal{R})$ such that $|\theta(\omega)-|\psi|(\omega)|<\delta$ $\mu$ a.e.  Then the identity map $X$ on $L^2 (\mu)$ defines an isometrical isomorphism $X_0$ from the closure of 
 $H^\infty (\Omega)$ in $L^2 (|\psi|^2 \mu)$ to the closure of $H^\infty (\Omega)$ in $L^2 (|\theta|^2 \mu)$ such that 
 $$\text{max} \{||X_0||,||X^{-1}_{0}||\} \leq \sup_{\omega \in \Omega}
 \left\{ \text{max} \left\{\ \frac{|\theta(\omega)|}{|\psi(\omega)|}, \frac{|\psi(\omega)|}{|\theta(\omega)|} \right\} \right\}\mu \text{ a.e. }.$$
 Hence the two submodules are almost isometric or the ranges of $T^{\mathcal{R}}_{\psi}$ and $T^{\mathcal{R}}_{\theta}$ in $\mathcal{R}$ are almost isometric.  But $\theta$ invertible means the
 latter submodule is $\mathcal{R}$ which completes the proof.
\end{proof}

\begin{thm}\label{7.6}
 Let $\mathcal{R}$ be a subnormal RKHM over $\Omega \subseteq \mathbb{C}^m$ for the probability measure $\mu$ supported on $\bar{\Omega}$ such that $\mathcal{M}(\mathcal{R})=H^\infty (\Omega)$ 
 and such that $H^\infty (\Omega)$ is invertibly approximating in modulus.  Then (6) for $\mathcal{R}$ implies it for all cyclic submodules that are the range of $T^{\mathcal{R}}_{\psi}$ for 
 some $\psi \in H^\infty (\Omega)$ that has closed range.
\end{thm}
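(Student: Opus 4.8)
The plan is to reduce to a single cyclic submodule, realize that submodule as an almost isometric copy of $\mathcal{R}$ by means of Proposition 7.5, and then transport the relevant corona condition across the resulting near-isometry using Proposition 7.4. In effect the theorem records that, under the present hypotheses, the quantitative input (statement (13)) that Theorem 7.1 demands for the relevant family of submodules is already forced by the corresponding statement for $\mathcal{R}$ alone.

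First I would fix an $n$-tuple $\{\varphi_i\}_{i=1}^n \subseteq \mathcal{M}(\mathcal{R}) = H^\infty(\Omega)$ satisfying (0) and a multiplier $\psi \in H^\infty(\Omega)$ for which $T^{\mathcal{R}}_\psi$ has closed range, and set $\mathcal{R}' = \text{range } T^{\mathcal{R}}_\psi$, a submodule of $\mathcal{R}$. The standing hypotheses are precisely those under which Proposition 7.5 applies --- $\mathcal{R} = L^2_a(\mu)$ is subnormal with $\mathcal{M}(\mathcal{R}) = H^\infty(\Omega)$, and $H^\infty(\Omega)$ is invertibly approximating in modulus --- so Proposition 7.5 (and its proof) yield that $\mathcal{R}'$ is almost isometric to $\mathcal{R}$ as an RKHM over $\Omega$. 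The mechanism is worth recalling: closedness of range $T^{\mathcal{R}}_\psi$ forces $|\psi| \geq \eta$ $\mu$-a.e. for some $\eta > 0$; one approximates $|\psi|$ uniformly by the modulus of an invertible multiplier $\theta$; and the weight changes $h \mapsto \psi h$ and $h \mapsto \theta h$ identify $\mathcal{R}'$ and $\mathcal{R}$ isometrically with the closures of $H^\infty(\Omega)$ in $L^2(|\psi|^2 \mu)$ and $L^2(|\theta|^2 \mu)$, and these two are almost isometric through the identity map because $|\psi|$ and $|\theta|$ are comparable with ratio as near $1$ as we please.

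Next I would invoke Proposition 7.4(i) to get $\mathcal{M}(\mathcal{R}') = \mathcal{M}(\mathcal{R}) = H^\infty(\Omega)$, so that $\mathcal{R}'$ itself meets the hypotheses of Proposition 4.3. Consequently, for the fixed $n$-tuple, statement (6) for $\mathcal{R}'$ is equivalent to the corona solution (1) in $H^\infty(\Omega)$ --- which is literally the same assertion as the equivalence of (6) for $\mathcal{R}$ with (1) --- so that part transfers for free. What remains to be moved across is the quantitative Toeplitz corona inequality (12), and this is exactly the content of Proposition 7.4(ii): (12) holds for $\mathcal{R}$ with a given $\epsilon$ if and only if it holds for $\mathcal{R}'$ with the same $\epsilon$. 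Since $\psi$ was an arbitrary multiplier with closed-range Toeplitz operator, the conclusion then holds for every cyclic submodule of the stated form.

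The only step requiring real care --- and the place the hypotheses do their work --- is that no loss occur in the constant $\epsilon$; this is handled just as in the proof of Proposition 7.4(ii), where the estimate produces only $\epsilon^2/(1+\delta)^4$, but the $\delta \to 0$ quantifier built into the definition of ``almost isometric'' restores the original $\epsilon$. Conceptually, I expect the substantive ingredient to be Proposition 7.5: it is the hypothesis that $H^\infty(\Omega)$ be invertibly approximating in modulus that lets the range of each closed-range Toeplitz operator be recognized as a near-copy of $\mathcal{R}$ rather than some genuinely different weighted space. For a subnormal RKHM whose multiplier algebra is logmodular this hypothesis holds automatically, so there the argument goes through with nothing extra assumed.
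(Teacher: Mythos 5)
Your argument is correct and is essentially the paper's own proof: the paper disposes of the theorem in one line by "combining the previous two propositions," i.e.\ using Proposition 7.5 to realize range $T^{\mathcal{R}}_{\psi}$ as almost isometric to $\mathcal{R}$ and Proposition 7.4 to transfer the multiplier algebra and the Toeplitz corona inequality with the same $\epsilon$. You have simply spelled out that combination (and sensibly read the mislabeled ``(6)'' as the Toeplitz corona property), so nothing further is needed.
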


\begin{proof} 
 By hypothesis, the multiplier algebra is invertibly approximating in modulus and the rest follows by combining the previous two propositions.
\end{proof}

>From this result it follows for a subnormal RKHM $\mathcal{R}$ such that $H^\infty (\Omega)$ has the invertibly approximating in modulus property and has the Toeplitz corona property (12), 
then the corona property (1) is valid.

\begin{rem}\label{7.7}
 Unfortunately, $H^\infty (\mathbb{B}^m)$ is NOT invertibly approximating in modulus.  Although there is a function $\psi \in H^\infty (\mathbb{B}^m)$ such that 
 $|\psi (\zeta)|^2 =1+|\zeta_1 |^2$ on $\partial\mathbb{B}^m$ a.e. (cf[11]), for $m>1$ $\psi$ cannot be chosen to be invertible.  In particular, the function $z_1 \to \psi(z_1,...,z_m)$ would 
 be outer for each $(z_2,...,z_m ) \in \mathbb{B}^{m-1}$ and since it has constant modulus it must be constant.  Further consideration shows that $|\psi(z)|^2 =1+|z|^2$ for $z\in \mathbb{B}^m$ 
 which is not possible.
\end{rem}

It seems possible that results in [3] can be used to show that the assumption that $H^\infty (\Omega)$ is invertibly approximating in modulus implies that $m=1$ and that $\Omega$ is conformally
equivalent to $\mathbb{D}$.

\begin{thm}\label{7.8}
 Suppose $\mathcal{R}$ is a subnormal RKHM over $\Omega \subseteq \mathbb{C}^m$ for the probability measure $\mu$ supported on $\bar{\Omega}$ such that $H^\infty (\Omega)$ is an invertibly 
 approximately in modulus algebra.  Then (6) for $\mathcal{R}$ implies it for all cyclic submodules of $\mathcal{R}$ for the same $\epsilon$.  In particular, if the Toeplitz corona property (6)
 holds for $\mathcal{R}$, then it holds for all submodules of $\mathcal{R}$.
\end{thm}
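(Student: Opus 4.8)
I take ``(6)'' in the statement to refer to the Toeplitz corona property (as the second sentence makes explicit), which by Proposition 6.1 coincides with the lower bound (12) and with the surjectivity (11); I argue with (12). The plan has two parts. The first is purely structural: if (12) holds for every cyclic submodule of $\mathcal{R}$ with one common constant $\epsilon$, then it holds for every submodule with that $\epsilon$. The second, which is the substance, is that (12) for $\mathcal{R}$ passes to each cyclic submodule $[g]$, $g\in\mathcal{R}$, with no loss in $\epsilon$; for this I would lean on the weight-approximation of Propositions 7.4 and 7.5 together with a limiting (weak compactness / normal family) argument of the kind already used for Theorem 7.1.

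The first part is quick. Let $\mathcal{S}\subseteq\mathcal{R}$ be a submodule and fix $f\in\mathcal{S}$. The cyclic submodule $[f]=\overline{\mathcal{M}(\mathcal{R})f}$ lies in $\mathcal{S}$ and is invariant under each $T^{\mathcal{R}}_{\varphi_i}$, so $T^{[f]}_{\varphi_i}$ is the restriction of $T^{\mathcal{S}}_{\varphi_i}$ to $[f]$; consequently $T^{[f]\,*}_{\varphi_i}h=P_{[f]}\,T^{\mathcal{S}\,*}_{\varphi_i}h$ for $h\in[f]$, where $P_{[f]}$ is the orthogonal projection onto $[f]$, whence $\|T^{[f]\,*}_{\varphi_i}f\|\le\|T^{\mathcal{S}\,*}_{\varphi_i}f\|$. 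Applying (12) for $[f]$ with the common $\epsilon$ gives $\epsilon^2\|f\|^2\le\sum_i\|T^{[f]\,*}_{\varphi_i}f\|^2\le\sum_i\|T^{\mathcal{S}\,*}_{\varphi_i}f\|^2$, and since $f$ was arbitrary, (12) holds for $\mathcal{S}$. So everything comes down to the second part.

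For that, fix $g\in\mathcal{R}$. Subnormality gives $\|\psi g\|_{\mathcal{R}}^2=\int_{\bar\Omega}|\psi|^2|g|^2\,d\mu$ for every $\psi\in\mathcal{M}(\mathcal{R})$, so $\psi\mapsto\psi g$ extends to an isometric module isomorphism of the closure $\mathcal{R}_g$ of $\mathcal{M}(\mathcal{R})$ in $L^2(|g|^2\mu)$ onto $[g]$; it therefore suffices to verify (12) for $\mathcal{R}_g$. By Proposition 6.1 and the open mapping theorem this is equivalent to $N_\Phi$ being onto $\mathcal{R}_g$ with a right inverse of norm at most $\epsilon^{-1}$, and for this it suffices --- a routine weak-compactness argument then handles general elements --- to produce, for each $h$ in the dense subspace $\mathcal{M}(\mathcal{R})\subseteq\mathcal{R}_g$, functions $h_i\in\mathcal{R}_g$ with $\sum_i\varphi_ih_i=h$ and $\sum_i\|h_i\|^2_{|g|^2\mu}\le\epsilon^{-2}\|h\|^2_{|g|^2\mu}$. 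Now approximate the weight: for $\delta>0$ let $w_\delta$ be a truncation of $|g|^2$ lying in $L^\infty(\mu)^{-1}$ --- when $g$ is bounded the one-sided truncation $w_\delta=\max\{|g|^2,\delta\}\ge|g|^2$ works, and in general one must also truncate from above --- so that $w_\delta\to|g|^2$ both $\mu$-a.e. and in $L^1(\mu)$. By the invertibly approximating in modulus hypothesis, together with the observation (as in the proof of Proposition 7.5) that a multiplier bounded below in modulus may be taken invertible in $\mathcal{M}(\mathcal{R})$, there is an invertible $\theta_\delta\in\mathcal{M}(\mathcal{R})$ with $|\theta_\delta|^2$ comparable to $w_\delta$, the ratio tending to $1$ as $\delta\to0$. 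Since $\theta_\delta$ is invertible, $\psi\mapsto\psi\theta_\delta$ identifies the closure of $\mathcal{M}(\mathcal{R})$ in $L^2(|\theta_\delta|^2\mu)$ isometrically with $\overline{\mathcal{M}(\mathcal{R})}$, intertwining multiplication by each $\varphi_i$; here one assumes, as holds for the Hardy and Bergman examples, that $\mathcal{M}(\mathcal{R})$ is dense in $\mathcal{R}$ (otherwise replace $\mathcal{R}$ by $[1]$ throughout). Hence (12) for $\mathcal{R}$ with the hypothesized $\epsilon$ supplies, for each $h\in\mathcal{M}(\mathcal{R})$, functions $h^{(\delta)}_i$ holomorphic on $\Omega$ with $\sum_i\varphi_ih^{(\delta)}_i=h$ on $\Omega$ and $\sum_i\|h^{(\delta)}_i\|^2_{w_\delta\mu}\le(1+o(1))\,\epsilon^{-2}\|h\|^2_{w_\delta\mu}$ as $\delta\to0$.

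It remains to pass to the limit $\delta\to0$, where the right-hand side tends to $\epsilon^{-2}\|h\|^2_{|g|^2\mu}$ by dominated convergence. When $g$ is bounded this closes the argument: because $w_\delta\ge|g|^2$, the $h^{(\delta)}_i$ are bounded in $L^2(|g|^2\mu)$, so along a subsequence $h^{(\delta)}_i\rightharpoonup h_i$ weakly there; the limit lies in the weakly closed subspace $\mathcal{R}_g$, satisfies $\sum_i\varphi_ih_i=h$ by weak continuity of multiplication, and meets the norm bound by weak lower semicontinuity --- exactly what is needed. The case of unbounded $g$ is where I expect the real difficulty. There the necessary upper truncation makes $w_\delta$ drop below $|g|^2$ on the set where $|g|^2$ is large, and one must show that none of the $L^2(|g|^2\mu)$-mass of the solutions escapes on that tail as $\delta\to0$, so that the limit $h_i$ still lies in $\mathcal{R}_g$ with the claimed norm. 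I would expect closing this gap to require either a reduction of $[g]$ to a cyclic submodule with a bounded generator --- available for $H^2(\mathbb{D})$ through the inner-outer factorization, where every cyclic submodule is $\theta H^2$ with $\theta$ inner, but not in general --- or a quantitative control of the solutions of $\sum_i\varphi_ih_i=h$ on that tail, of the same harmonic-analytic flavor as the steps indicated after Theorem 7.1.
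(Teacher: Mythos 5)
Your reading of ``(6)'' as the Toeplitz corona property (12) is the right one, and your two-step structure is sound. The first step --- passing from all cyclic submodules to all submodules via $[f]\subseteq\mathcal{S}$ and the inequality $\|T^{[f]\,*}_{\varphi_i}f\|=\|P_{[f]}T^{\mathcal{S}\,*}_{\varphi_i}f\|\le\|T^{\mathcal{S}\,*}_{\varphi_i}f\|$ --- is correct, and it is in fact a supplement the paper needs for its ``in particular, all submodules'' clause but never writes down. Your second step is, in substance, the paper's own route: the paper gives no separate proof of Theorem 7.8, intending it to follow, as Theorem 7.6 does, by combining Proposition 7.5 (approximate the modulus of the weight by an invertible multiplier $\theta$, so the closure of $\mathcal{M}(\mathcal{R})$ in the weighted $L^2$ space is almost isometric to the closure in $L^2(|\theta|^2\mu)\cong\mathcal{R}$) with Proposition 7.4 (an almost isometry transfers (12) with the same $\epsilon$). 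Your $\theta_\delta$ construction and the $(1+o(1))\epsilon^{-2}$ bound are exactly this mechanism, written out with the truncated weight $w_\delta$ in place of $|\psi|^2$; your density caveat (replacing $\mathcal{R}$ by $[1]$) is implicit in the paper's choice $\mathcal{R}=L^2_a(\mu)$ in Proposition 7.5.

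The gap you flag is genuine, and it is worth locating precisely. The invertibly-approximating hypothesis applies only to weights in $L^\infty(\mu)^{-1}$, so for a general $g\in\mathcal{R}$ you must truncate $|g|^2$ from above as well as below; you then lose the domination $w_\delta\ge|g|^2$, your weak-compactness argument controls the solutions only against $w_\delta\,\mu$, and nothing prevents $L^2(|g|^2\mu)$-mass from escaping on the set where $|g|$ is large as $\delta\to0$. As written, your argument therefore proves the theorem only for cyclic submodules whose generator has modulus bounded above and below $\mu$-a.e.\ (equivalently, comparable to an invertible element of $L^\infty(\mu)$), not for ``all cyclic submodules'' as the statement literally claims. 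You are in good company: the paper's own chain of results (Proposition 7.5 and Theorem 7.6) covers only generators $\psi\in H^\infty(\Omega)$ for which $T^{\mathcal{R}}_{\psi}$ has closed range, and the discussion after Theorem 7.1 makes clear that the family actually needed there consists of cyclic submodules with generators invertible in $L^\infty(\mu)$ --- precisely the case your proof does handle with no loss in $\epsilon$. So the shortfall you identify is a gap in the literal statement of Theorem 7.8 itself rather than a defect peculiar to your approach; closing it would indeed require either a reduction to bounded-below generators (as inner-outer factorization provides for $H^2(\mathbb{D})$) or a quantitative tail estimate of the kind you describe.
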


\begin{rem}\label{7.9}
 It seems unlikely that all pairs of almost isometric Hilbert modules are in fact unitarily equivalent.  However, that is the case for a Hilbert module $\mathcal{M}$ over 
 $\mathbb{C}[z_1,...,z_n]$ for which the common eigenspaces are finite dimensional and they span $\mathcal{M}$.
\end{rem}

\begin{rem}\label{7.10}
 In [5] Hamilton-Raghupathi show that one can use another development in operator theory, factorization in dual algebras, to prove some general Toeplitz corona theorems.  Combining these 
 results with some results of Prunaru yields Toeplitz corona theorems for Bergman spaces over certain domains in $\mathbb{C}^m$.
\end{rem}

\section{Taylor Spectrum and the Corona Problem}
In [7] Taylor introduced a notion of joint spectrum for $n$-tuples of commuting elements in a Banach algebra which also applies to an $n$-tuple of commuting operators $(T_1, ..., T_n)$ on a
Hilbert space $\mathcal{H}$.  The Taylor spectrum, $\sigma^{TAY} (T_1,...,T_n)$, is a nonempty compact subset of $\mathbb{C}^n$ for which there is a good functional calculus.  Moreover, Taylor 
shows that the existence of an $n$-tuple $(S_1,...,S_n)$ of operators on $\mathcal{H}$ that commute with each other and the $\{T_i\}^{n}_{i=1}$ implies that 
$\boldsymbol{0}=(0,...,0) \notin \sigma^{TAY}(T_1,...,T_n)$.  In particular, for $\{\varphi_i\}^{n}_{i=1} \subset \mathcal{M}(\mathcal{R})$ for a RKHM $\mathcal{R}$ over 
$\Omega \subseteq \mathbb{C}^n$, a necessary condition for (1) to hold or for the corona problem to have an affirmative solution is for
\begin{itemize}
 \item [(13)] $\boldsymbol{0} \notin \sigma^{TAY} (T^{\mathcal{R}}_{\varphi_1}, ..., T^{\mathcal{R}}_{\varphi_n})$.
\end{itemize}

The origin $\boldsymbol{0}$ is not in the Taylor spectrum precisely when the Koszul complex, built from the $n$-tuple $(T^{\mathcal{R}}_{\varphi_1},...,T^{\mathcal{R}}_{\varphi_n})$, is exact.
We won't recall the definition of the full Korzul complex but only for the sequence at the first and the last nonzero modules which are 
$0 \to \mathcal{R} \xrightarrow{M_\Phi} \mathcal{R} \otimes \mathbb{C}^n$ and $\mathcal{R} \otimes \mathbb{C}^n \xrightarrow{N_\Phi} \mathcal{R} \to 0$.  Hence, \emph{(13) or exactness of the 
Koszul complex} implies that the range of $M_\Phi$ is closed and that $N_\Phi$ is onto.  In particular, the assumption of exactness implies (10) or that the Toeplitz corona problem has an 
affirmative solution.  These considerations suggest the following:

\begin{que}\label{8.1}
Does the exactness of the Koszul complex for $(T^{\mathcal{R}}_{\varphi_1},...,T^{\mathcal{R}}_{\varphi_n})$ for some RKHM $\mathcal{R}$ over $\Omega \subseteq \mathbb{C}^n$ imply (1) or that 
the corona problem has an affirmative solution?
\end{que}

For that to be true, one would know that for two RKHM
$\mathcal{R},\mathcal{R}^{'}$ with $\mathcal{M}(\mathcal{R}) = \mathcal{M}(\mathcal{R}^{'}),\\
 \sigma^{TAY}(T^{\mathcal{R}}_{\varphi_1},...,T^{\mathcal{R}}_{\varphi_n})=\sigma^{TAY}(T^{\mathcal{R}^{'}}_{\varphi_1},...,T^{\mathcal{R}^{'}}_{\varphi_n})$.

In [8] Taylor considers in more detail the possible implication of when $\boldsymbol{0} \notin \sigma^{TAY}(T^{\mathcal{R}}_{\varphi_1},...,T^{\mathcal{R}}_{\varphi_n})$, implies (1).  We 
prove the following related result after introducing some notions and propositions.

\begin{thm}\label{8.2}
 Let $\{ \varphi_i\}^{n}_{i=1} \subseteq \mathcal{M}(\mathcal{R})$ for the RKHM $\mathcal{R}$ over $\Omega \subseteq \mathbb{C}^m$ such that $\boldsymbol{0} \notin$ closed 
 polynomial convex hull of $\{ \Phi (\omega): \omega \in \Omega\} \subseteq \mathbb{C}^n.$  Then (1) holds or the corona problem has an affirmative solution.
\end{thm}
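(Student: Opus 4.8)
The plan is to use the polynomial convexity hypothesis exactly once, to manufacture a single separating polynomial, and then to finish by a completely elementary geometric‑series inversion.

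First I would extract the function‑theoretic content of the hypothesis. Put $K=\overline{\Phi(\Omega)}\subseteq\mathbb{C}^n$ (compact, since each $\varphi_i$ is bounded) and let $\widehat{K}$ be its polynomial convex hull. By the very definition of the hull, $\boldsymbol{0}\notin\widehat{K}$ means there is a polynomial $p_0\in\mathbb{C}[z_1,\dots,z_n]$ with $|p_0(\boldsymbol{0})|>\sup_K|p_0|$; dividing by $p_0(\boldsymbol{0})$ and then replacing the result by a high power of itself yields a polynomial $p$ with
\begin{equation*}
p(\boldsymbol{0})=1\qquad\text{and}\qquad \rho:=\sup_{\omega\in\Omega}|p(\Phi(\omega))|=\sup_{K}|p|<1 ,
\end{equation*}
and $\rho$ may be taken as small as we wish. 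Grouping the monomials of $p$ of positive degree (each $z^\alpha$ with $|\alpha|\ge1$ is divisible by some $z_i$), write $p(z)=1+\sum_{i=1}^{n}z_i\,q_i(z)$ with $q_i\in\mathbb{C}[z_1,\dots,z_n]$. Substituting $z=\Phi(\omega)$ produces the identity
\begin{equation*}
\sum_{i=1}^{n}\varphi_i\,q_i(\Phi)=p(\Phi)-1 ,
\end{equation*}
in which $p(\Phi)=p(\varphi_1,\dots,\varphi_n)$ and each $q_i(\Phi)$ lie in $\mathcal{M}(\mathcal{R})$ simply because $\mathcal{M}(\mathcal{R})$ is an algebra containing the $\varphi_i$.

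It then remains to invert $u:=1-p(\Phi)$. Since $\|p(\Phi)\|_\infty=\rho<1$, the Neumann series $u^{-1}=\sum_{k\ge0}p(\Phi)^k$ converges absolutely (with $\|p(\Phi)^k\|_\infty=\rho^k$), so at the level of $H^\infty(\Omega)$ one gets $u^{-1}\in H^\infty(\Omega)$ immediately; setting $\psi_i:=-u^{-1}\,q_i(\Phi)$ gives
\begin{equation*}
\sum_{i=1}^{n}\varphi_i\,\psi_i=-u^{-1}\sum_{i=1}^{n}\varphi_i\,q_i(\Phi)=-u^{-1}\bigl(p(\Phi)-1\bigr)=u^{-1}u=1 ,
\end{equation*}
which is statement (1). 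When $\mathcal{M}(\mathcal{R})=H^\infty(\Omega)$ (e.g.\ $\mathcal{R}=L^2_a(\mu)$) the $\psi_i$ are multipliers and the theorem is proved as stated.

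The one place where care is needed, and what I expect to be the main obstacle, is checking that $u^{-1}$ is again a \emph{multiplier} when $\mathcal{M}(\mathcal{R})$ is strictly smaller than $H^\infty(\Omega)$. Equivalently, one must see that $M_u=I-M_{p(\Phi)}$ is invertible in $\mathcal{L}(\mathcal{R})$, for then its inverse is a module map on $\mathcal{R}$ and hence equals $M_{u^{-1}}$ with $u^{-1}\in\mathcal{M}(\mathcal{R})$ by Lemma 3.4. Since $M_{p(\Phi)}=p(T^{\mathcal{R}}_{\varphi_1},\dots,T^{\mathcal{R}}_{\varphi_n})$, the spectral mapping theorem for the Taylor spectrum gives $\sigma_{\mathcal{L}(\mathcal{R})}(M_{p(\Phi)})=p\bigl(\sigma^{TAY}(T^{\mathcal{R}}_{\varphi_1},\dots,T^{\mathcal{R}}_{\varphi_n})\bigr)$, so it would suffice to know $\sigma^{TAY}(T^{\mathcal{R}}_{\varphi_1},\dots,T^{\mathcal{R}}_{\varphi_n})\subseteq\widehat{K}$: then every spectral value of $M_{p(\Phi)}$ has modulus at most $\|p\|_{\widehat{K}}=\|p\|_{K}=\rho<1$, so $1\notin\sigma_{\mathcal{L}(\mathcal{R})}(M_{p(\Phi)})$. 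The reproducing‑kernel structure supplies the reverse inclusion for free — the relations $T^{\mathcal{R}*}_{\varphi_i}k_\omega=\overline{\varphi_i(\omega)}k_\omega$ put $K$, hence its polynomial hull $\widehat{K}$ (Taylor spectra of commuting tuples being polynomially convex), \emph{inside} $\sigma^{TAY}$ — but it is the other inclusion that is required here, and in general it has to be imported as a hypothesis on $\mathcal{R}$ (the natural one being $\mathcal{M}(\mathcal{R})=H^\infty(\Omega)$; in the subnormal case it can be controlled via an isometric embedding $\mathcal{R}\subseteq L^2(\mu)$ as in Proposition 3.1). Identifying precisely what is needed to push the inversion from $H^\infty(\Omega)$ back into $\mathcal{M}(\mathcal{R})$ is the crux of the argument.
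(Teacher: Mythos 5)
Your argument is correct and lands exactly where the paper's own proof lands --- an $H^\infty(\Omega)$ solution of $\sum_{i=1}^n\varphi_i\psi_i=1$ --- but by a genuinely different route. The paper works inside the uniform algebra $P(\Phi(\Omega))$: by Proposition 8.3 its maximal ideal space is $\mathrm{Pol}(\Phi(\Omega))$, so $\boldsymbol{0}\notin\mathrm{Pol}(\Phi(\Omega))$ forces the ideal generated by $z_1,\dots,z_n$ in $P(\Phi(\Omega))$ to be improper (a proper ideal would lie in a maximal ideal, i.e.\ correspond to a point of the hull at which every $z_i$ vanishes, namely $\boldsymbol{0}$); one then writes $1=\sum_{i=1}^n z_i\sigma_i$ with $\sigma_i\in P(\Phi(\Omega))$ and composes with $\Phi$. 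You bypass this Gelfand-theoretic step entirely: normalizing and powering the separating polynomial, splitting $p=1+\sum_i z_iq_i$, and inverting $u=1-p(\Phi)$ by a Neumann series in $H^\infty(\Omega)$ is a constructive re-proof of the same ideal statement (your $\sigma_i=-u^{-1}q_i$ are visibly uniform limits of polynomials on $\Phi(\Omega)$), and it buys explicit sup-norm control of the $\psi_i$ in terms of the separating polynomial, which the abstract argument does not display. Your closing caveat is also consistent with the paper: its proof, like yours, produces the $\psi_i$ only in $H^\infty(\Omega)$, and upgrading them to $\mathcal{M}(\mathcal{R})$ when the multiplier algebra is strictly smaller is precisely what Remark 8.4, Corollary 8.5 (which assumes $\mathcal{M}(\mathcal{R})=H^\infty(\Omega)$), and the sentence following it leave open. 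One small correction to your aside: Taylor spectra are not polynomially convex in general (already for a single operator the spectrum can be the unit circle, e.g.\ the bilateral shift), so the inclusion of the hull $\widehat{K}$ into $\sigma^{TAY}(T^{\mathcal{R}}_{\varphi_1},\dots,T^{\mathcal{R}}_{\varphi_n})$ does not come for free; as you yourself note, though, it is the opposite inclusion that would be needed there, so this slip does not affect your main argument.
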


For a bounded subset $X\subseteq \mathbb{C}^n$, its polynomial convex hull, $\text{Pol}(X)$, is defined
$$\text{Pol}(X)=\{z_0 \in \mathbb{C}^n : |p(z)| \leq \sup_{z \in X} |p(z)| \text{ for } p\in \mathbb{C} [z_1, ..., z_n] \}.$$

Let $P(X)$ denote the function algebra obtained by completing the restriction of $\mathbb{C}[z_1, ..., z_n]$ to $X$ in the supremum norm.  (Note that if $\bar{X}$ is the closure of $X$, then
$\text{Pol}(\bar{X})=\text{Pol}(X)$ and $P(\bar{X})=P(X)$.)  One way in which the polynomial convex hull arises is in the following result:

\begin{pro}\label{8.3}
 For a bounded subset $X$ of $\mathbb{C}^n , \mathcal{M}_{P(X)} = \text{Pol}(X).$
\end{pro}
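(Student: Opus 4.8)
The plan is to exhibit an explicit homeomorphism between $\mathcal{M}_{P(X)}$ and $\text{Pol}(\bar{X})$ — which by the parenthetical remark equals $\text{Pol}(X)$ — using the coordinate functions, in direct analogy with the map $\pi_\Omega$ introduced in Section 2. Since $X$ is bounded, $\bar{X}$ is compact and $P(X) = P(\bar{X})$ is the uniform closure of the polynomials on $\bar{X}$; in particular $\|p\|_{P(X)} = \sup_{z \in \bar{X}} |p(z)|$ for every $p \in \mathbb{C}[z_1, \ldots, z_n]$.

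First I would define $\pi : \mathcal{M}_{P(X)} \to \mathbb{C}^n$ by $\pi(\phi) = (\phi(z_1), \ldots, \phi(z_n))$. This map is weak*-continuous, and it is injective: two characters agreeing on the coordinate functions agree on every polynomial, hence on all of $P(X)$ because polynomials are dense and characters are norm-continuous. Its image is compact, being the continuous image of the compact space $\mathcal{M}_{P(X)}$. Next I would identify that image. For the inclusion $\pi(\mathcal{M}_{P(X)}) \subseteq \text{Pol}(\bar{X})$: if $z_0 = \pi(\phi)$, then for any polynomial $p$ we have $|p(z_0)| = |\phi(p)| \le \|\phi\| \, \|p\|_{P(X)} = \sup_{\bar{X}} |p|$, so $z_0 \in \text{Pol}(\bar{X})$. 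For the reverse inclusion: given $z_0 \in \text{Pol}(\bar{X})$, the evaluation $p \mapsto p(z_0)$ on polynomials satisfies $|p(z_0)| \le \sup_{\bar{X}} |p| = \|p\|_{P(X)}$, so it extends by continuity to a bounded linear functional $\phi_0$ on $P(X)$ with $\|\phi_0\| \le 1$; since $\phi_0(1) = 1$ and, approximating $f, g \in P(X)$ by polynomials, continuity of multiplication in the uniform algebra $P(X)$ forces $\phi_0(fg) = \phi_0(f)\phi_0(g)$, the functional $\phi_0$ lies in $\mathcal{M}_{P(X)}$ and $\pi(\phi_0) = z_0$.

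Finally, $\pi$ is a continuous bijection from the compact space $\mathcal{M}_{P(X)}$ onto the Hausdorff space $\text{Pol}(\bar{X})$, hence a homeomorphism, which is the asserted identification $\mathcal{M}_{P(X)} = \text{Pol}(X)$. Under it, the points of $\bar{X}$ sit inside $\mathcal{M}_{P(X)}$ as the point evaluations, consistently with the inclusion $\bar{X} \subseteq \text{Pol}(\bar{X})$ and with the embedding $\Omega \subseteq M_{H^\infty(\Omega)}$ discussed in Section 2.

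The only step needing genuine care — the main obstacle, such as it is — is checking that the extension $\phi_0$ of the point evaluation at $z_0 \in \text{Pol}(\bar{X})$ is multiplicative on all of $P(X)$, not merely on the dense subalgebra of polynomials. This is precisely where the joint continuity of multiplication in a uniform algebra, together with the continuity of $\phi_0$, is used, and it is what makes polynomial convexity exactly the correct hypothesis.
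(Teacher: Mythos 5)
Your argument is correct and follows essentially the same route as the paper: you send a character $\phi$ to the point $(\phi(z_1),\dots,\phi(z_n))$, which lands in $\text{Pol}(X)$ by the norm bound, and conversely extend evaluation at a point of $\text{Pol}(X)$ from the dense polynomials to a character of $P(X)$, exactly the two-way identification the paper gives. Your only additions are harmless extra care about the topology (continuous bijection from a compact space to a Hausdorff space) and the multiplicativity of the extended functional, which the paper leaves implicit.
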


\begin{proof}
 First, note that for $z_0 \in \text{Pol}(X)$, it follows that evaluation at $z_0$ defines a bounded multiplicative linear functional on $P(X)$ which yields an embedding of $\text{Pol}(X)$ into 
 $M_{P(X)}$.  Second, if $L$ is a bounded multiplicative linear functional on $P(X)$, then $(L(z_1),...,L(z_n)) \in \mathbb{C}^n$ determines a point in $\text{Pol}(X)$ at which 
 evaluation is identical to $L$.  This identification completes the proof.
\end{proof}

\begin{proof} \emph{Theorem 8.2:} If $\Phi=(\varphi_1, ..., \varphi_n): \Omega \to \mathbb{C}^n$, then the assumption that $\boldsymbol{0} \notin \text{Pol}(\Phi(\Omega))$ implies the 
existence of functions $\{ \sigma_i \}^{n}_{i=1} \subseteq P(\Phi(\Omega))$ satisfying $$\sum^{n}_{i=1} z_i \sigma_i (z_1, ..., z_n)=1 \text{ for } (z_1, ..., z_n) \in \text{Pol}(\Phi(\Omega)).$$
This follows because the ideal generated by $z_1,...,z_n$ in $\text{Pol}(X)$ can't be proper or evaluation at $\boldsymbol(0)$ would define a multiplicative linear functional on
$P(X)$.  Hence, the ideal contains the constant function $1$.  Now setting 
$\psi_i(\omega_1, ..., \omega_m),=\sigma_i (\varphi_i (\omega_1, ..., \omega_m), ..., \varphi_n (\omega_1,...,\omega_m)) \text{ for }(\omega_1,...,\omega_m)\in \Omega$ yields functions 
$\{ \psi_i \}^{n}_{i=1} \subseteq H^{\infty} (\Omega)$ satisfying $$\sum^{n}_{i=1} \varphi_i(\omega) \psi_i(\omega)=1 \text{ for }\omega \in \Omega,$$ which completes the proof.
\end{proof}

\begin{rem}\label{8.4}
 If the basic algebra is not $H^\infty (\Omega)$ but is some other Banach algebra of holomorphic functions $\mathcal{A}$, then it seems likely that one could adapt the argument in Taylor [9] to 
 reach the same conclusion with the functions constructed being in $\mathcal{A}$.
\end{rem}

\begin{cor}\label{8.5}
 For $\{ \varphi_i\}^{n}_{i=1} \subseteq \mathcal{M}(\mathcal{R})=H^\infty (\Omega)$ for the RKHM $\mathcal{R}$ over $\Omega \subseteq \mathbb{C}^m$, if there is a 
 $p(z) \in \mathbb{C}[z_1,...,z_n]$ such that $$|p(\boldsymbol{0})| > \sup_{\omega \in \Omega} |p(\varphi_1 (\omega)) |p(\varphi_1 (\omega),...,\varphi_n (\omega)),$$ then (1) holds or the 
 corona problem has an affirmative solution.
\end{cor}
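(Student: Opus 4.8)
The plan is to reduce this statement immediately to Theorem 8.2, the point being that the displayed inequality is exactly the assertion that $\boldsymbol{0}$ fails to lie in the polynomial convex hull of $\Phi(\Omega)$. First I would set $X = \Phi(\Omega) = \{(\varphi_1(\omega),\dots,\varphi_n(\omega)) : \omega \in \Omega\} \subseteq \mathbb{C}^n$ and recall the definition $\text{Pol}(X) = \{z_0 \in \mathbb{C}^n : |q(z_0)| \le \sup_{z \in X} |q(z)| \text{ for all } q \in \mathbb{C}[z_1,\dots,z_n]\}$. The hypothesis furnishes a single polynomial $p$ with $|p(\boldsymbol{0})| > \sup_{\omega \in \Omega} |p(\varphi_1(\omega),\dots,\varphi_n(\omega))| = \sup_{z\in X} |p(z)|$, so $\boldsymbol{0}$ violates the defining inequality for membership in $\text{Pol}(X)$; hence $\boldsymbol{0} \notin \text{Pol}(X)$. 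By the remark recorded just before Proposition 8.3, $\text{Pol}(\bar{X}) = \text{Pol}(X)$, so the origin also misses the closed polynomial convex hull of $\Phi(\Omega)$.

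Having verified the hypothesis of Theorem 8.2, I would simply invoke that theorem: it produces functions $\{\psi_i\}_{i=1}^{n} \subseteq H^\infty(\Omega)$ with $\sum_{i=1}^{n} \varphi_i(\omega)\psi_i(\omega) = 1$ for all $\omega \in \Omega$, which is precisely statement (1). Unwinding what is really used, the chain is: $|p(\boldsymbol{0})| > \sup_X |p|$ gives $\boldsymbol{0} \notin \text{Pol}(\Phi(\Omega))$; since evaluation at $\boldsymbol{0}$ is then not a well-defined multiplicative linear functional on $P(\text{Pol}(\Phi(\Omega)))$, the ideal generated by the coordinate functions $z_1,\dots,z_n$ in that algebra is not proper, so there are $\sigma_i \in P(\Phi(\Omega))$ with $\sum_{i=1}^{n} z_i \sigma_i(z) = 1$ on $\text{Pol}(\Phi(\Omega))$; composing with $\Phi$ and setting $\psi_i = \sigma_i \circ \Phi$ solves the corona equation on $\Omega$.

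There is essentially no genuine obstacle here. The only point requiring a word of care is the logical translation of quantifiers --- that the existence of one separating polynomial $p$ is exactly the negation of the universally quantified condition defining $\text{Pol}(X)$ --- together with the observation that replacing $X$ by its closure changes neither $\text{Pol}(X)$ nor $P(X)$, so the word ``closed'' in the statement of Theorem 8.2 is harmless and the corollary is genuinely just the effective, checkable form of that theorem.
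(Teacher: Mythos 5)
Your proof is correct and is exactly the intended argument: the paper offers no separate proof of Corollary 8.5 precisely because, as you observe, the displayed inequality is the negation of the condition for $\boldsymbol{0}$ to lie in $\text{Pol}(\Phi(\Omega))$, so the corollary is an immediate application of Theorem 8.2 (with the harmless passage to the closure handled by $\text{Pol}(\bar{X})=\text{Pol}(X)$). Nothing further is needed.
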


As indicated in the previous remark, it is likely that one doesn't need to assume $\mathcal{M}(\mathcal{R})=H^\infty (\Omega)$.

\vspace{5mm}

\noindent Ronald G. Douglas\\                             
Texas A\&M University\\
College-Station, TX 77843-3368\\
rgd@tamu.edu

\end{document}